\numberwithin{equation}{section}
\pgfplotsset{compat=newest, compat/show suggested version=false}
\newtheorem{theorem}{Theorem}
\newtheorem{lemma}[theorem]{Lemma} 
\newtheorem{remark}[theorem]{Remark}
\newtheorem{assumption}[theorem]{Assumption}
\newcommand{\de}{\mathop{}\!\mathrm{d}}
\DeclareMathOperator{\tr}{tr}
\DeclareMathOperator{\supp}{supp}
\DeclarePairedDelimiter{\norm}{\lVert}{\rVert}
\newcommand{\Lap}{\upDelta}
\newcommand{\beq}{\begin{equation}}
\newcommand{\eeq}{\end{equation}}
\def\cA{\mathcal{A}}
\def\cB{\mathcal{B}}
\def\cE{\mathcal{E}}
\def\cF{\mathcal{F}}
\def\cK{\mathcal{K}}
\def\cR{\mathcal{R}}
\def\cV{\mathcal{V}}
\def\N{\mathbb{N}}
\def\R{\mathbb{R}}
\def\sD{\mathscr{D}}
\title[]{
Sparse RBF Networks for PDEs and nonlocal equations: function space theory, operator calculus, and training algorithms
}
\author{Zihan Shao}
\address{Department of Mathematics, University of California, San Diego, CA 92093, United States} 
\email{z6shao@ucsd.edu}
\author{Konstantin Pieper}
\address{Computer Science and Mathematics Division, Oak Ridge National Laboratory, Oak Ridge, TN 37831, United States}
\email{pieperk@ornl.gov}
\author{Xiaochuan Tian}
\address{Department of Mathematics, University of California, San Diego, CA 92093, United States} 
\email{xctian@ucsd.edu}
\date{\today}
\begin{document}

\begin{abstract}
This work presents a systematic analysis and extension of the sparse radial basis function network (SparseRBFnet) previously introduced for solving nonlinear partial differential equations (PDEs). Based on its adaptive-width shallow kernel network formulation, we further investigate its function-space characterization, operator evaluation, and computational algorithm. We provide a unified description of the solution space for a broad class of radial basis functions (RBFs). Under mild assumptions, this space admits a characterization as a Besov space, independent of the specific kernel choice. We further demonstrate how the explicit kernel-based structure enables quasi-analytical evaluation of both differential and nonlocal operators, including fractional Laplacians. On the computational end, we study the adaptive-width network and related three-phase training strategy through a comparison with variants concerning the modeling and algorithmic details. In particular, we assess the roles of second-order optimization, inner-weight training, network adaptivity, and anisotropic kernel parameterizations. Numerical experiments on high-order, fractional, and anisotropic PDE benchmarks illustrate the empirical insensitivity to kernel choice, as well as the resulting trade-offs between accuracy, sparsity, and computational cost. Collectively, these results consolidate and generalize the theoretical and computational framework of SparseRBFnet, supporting accurate sparse representations with efficient operator evaluation and offering theory-grounded guidance for algorithmic and modeling choices.
\end{abstract}

\maketitle
\section{Introduction}

Recent progress in machine learning has had a growing impact on computational mathematics, motivating the development of data-driven, learning-based, and meshfree approaches as alternatives to classical mesh-based methods for solving partial differential equations (PDEs)~\cite{ainsworth2021galerkin,ainsworth2025extended,chen2021solving,e2018deep,kharazmi2021hp,lagaris1998artificial,lu2021deepxde,raissi2019physics,siegel2023greedy,zang2020weak}. In particular, the success of deep learning has motivated parameterizing PDE solutions using neural networks (e.g., Physics-Informed Neural Networks (PINNs~\cite{raissi2019physics}), Deep Ritz~\cite{e2018deep}), where the solution is represented by a deep neural network and obtained through the minimization of a loss functional, typically a discretized Ritz energy or an $L^{2}$-type PDE residual. While deep neural networks have proven universality \cite{barron1993universal,cybenko1989approximation}, they give rise to large-scale, highly nonconvex, and often ill-conditioned optimization problems. This makes hyperparameter tuning and training challenging and potentially unstable, along with additional issues such as frequency (spectral) bias \cite{cao2025analysis,krishnapriyan2021characterizing,wang2023expert,wang2021understanding,wang2022and,rahaman2019spectral,rathore2024challenges,saarinen1993ill,xu2019frequency}.  To address these difficulties, several recent approaches revisit shallow network architectures and and adopt incremental basis construction strategies \cite{bach:2017,bengio2005convex,shao2025solving,siegel2023greedy}. These methods, including boosting-style procedures, Frank--Wolfe variants, and greedy-type algorithms, build the network adaptively by repeatedly solving a nonconvex subproblem that selects a new unit to be added to the model.
This strategy substantially reduces the nonconvexity of training (provided that the subproblem can be solved effectively), removes the need to predefine the network architecture, and leads to significantly more stable optimization in practice, while retaining strong approximation capability. In addition, one can characterize the functions represented by shallow neural networks using a precise Banach space framework \cite{bartolucci2023understanding,siegel2023characterization}. 
On the other hand, kernel methods are also widely used for solving PDEs, owing to their ease of implementation and the well-developed theory of reproducing kernel Hilbert spaces (RKHSs), also known as native spaces, for the associated function spaces \cite{schaback2006kernel,wendland2004scattered}. While kernel methods were originally developed for linear tasks such as regression and linear PDEs, recent work has extended them to solving nonlinear PDEs through iterative linearizations \cite{chen2021solving}. Nevertheless, a long-standing challenge in classical kernel methods is their strong sensitivity to the choice of the kernel and associated shape parameters, as well as severe ill-conditioning and poor scalability in high dimensions.

In our previous work, we proposed a sparse radial basis function network (SparseRBFnet) framework for solving nonlinear PDEs, motivated in part by addressing the sensitivity of classical kernel methods to the choice of RBF radii \cite{shao2025solving}. The method employs an adaptive training procedure with sparsity-promoting regularization, inserting and deleting kernels with trainable centers and radii to maintain a compact representation. The resulting solution space, corresponding to the function space induced by an infinite-width (integral) neural network, admits a reproducing kernel Banach space (RKBS) structure \cite{bartolucci2023understanding}, within which the existence of finite-width solutions can be established. From an algorithmic perspective, the proposed framework lies at the intersection of kernel methods, physics-informed learning, and greedy algorithms. Each neuron in the network corresponds to a kernel basis function with trainable parameters, linking the model directly to classical kernel methods. At the same time, the solution is parameterized as a shallow neural network and trained by minimizing a PINN-style loss functional, placing the approach within the broader class of physics-informed learning methods. The adaptive insertion of kernels follows a relaxed greedy strategy akin to orthogonal greedy algorithms. A subsequent refinement step then jointly optimizes the inner and outer weights. Together, these enable controlled growth of model complexity and improved optimization stability. 

This hybrid formulation naturally raises several questions that this work aims to address. In particular, to what extent does SparseRBFnet inherit the strengths of kernel methods, physics-informed learning, and greedy algorithms, while mitigating their respective limitations? While classical kernel methods provide clear characterizations of the RKHS associated with different kernel families, it remains unclear how the corresponding RKBS induced by different classes of kernels (e.g. Gaussian and Mat\'ern kernels) should be characterized, and how these differences manifest in practical performance. Moreover, the kernel-based structure enables efficient and accurate evaluation of differential operators through explicit calculus. For integer-order derivatives, this approach reduces both computational cost and memory usage compared with automatic differentiation (auto-diff), and it extends naturally to nonlocal operators that are infeasible to handle using auto-diff, including the fractional Laplacian. A formal and unified treatment of these aspects, particularly for fractional operators, is not yet developed.
Finally, the proposed training procedure employs a greedy-type kernel insertion followed by second-order optimization, which differs fundamentally from the optimization paradigms commonly used in existing learning-based PDE solvers. Understanding the role, necessity, and potential advantages of this algorithmic structure is therefore essential for the continued development and refinement of the method. 

Motivated by these considerations, \textit{this work extends SparseRBFnet into a broader and more unified framework along three directions: function space theory, operator evaluation, and training algorithm. }
First, we provide a clearer characterization of the solution spaces induced by a broad class of RBFs. Second, we demonstrate that the explicit kernel structure enables efficient and stable evaluation of PDE operators, including both the integer-order differential operators and the nonlocal operators. Third, on the algorithmic side, we conduct an in-depth study of the previously proposed optimization algorithm, examining the role of its components (e.g., adaptive insertion, second-order optimization, inner weight training) by comparing them with their variants. Numerical experiments on representative benchmarks, including high-order PDEs, fractional Poisson problems, and viscous (anisotropic) Eikonal equations, illustrate the trade-offs among accuracy, sparsity, and computational cost and highlight regimes where the method provides clear advantages. Taken together, these findings contribute to a more principled understanding of the SparseRBFnet framework from both theoretical and algorithmic perspectives, and clarify its potential advantages in solving more challenging PDEs. 

Next, we review the related work, outline the structure of the manuscript, and summarize the main contributions of this work.
\subsection{Related studies}

\subsubsection{Deep learning approaches (PINNs and related approaches)}

Deep learning approaches, with physics-informed neural networks (PINNs) as a representative example \cite{raissi2019physics,lu2021deepxde,Shin_2023,ainsworth2021galerkin,bonito2025convergence}, have been widely applied to approximate the solutions of PDEs. While these works establish great success, they typically rely on fixed, large-scale, and highly over-parameterized neural networks, as is standard in much of the existing literature. It has been demonstrated that deep architectures outperform shallow neural networks in a wide range of contexts \cite{malach2019deeper,mhaskar2017and}. However, the deeply nested nonlinear structure of such networks renders the explicit evaluation of derivatives of the neural network ansatz practically infeasible, forcing a heavy reliance on automatic differentiation (auto-diff) \cite{baydin2018automatic}. While convenient to implement, auto-diff generally requires constructing large computational graphs, which can lead to substantial memory and computational overhead. These costs become especially prohibitive in high spatial dimensions or when evaluating high-order differential operators \cite{bettencourt2019taylor,karnakov2024solving,sirignano2018dgm}. Moreover, recent studies have observed that training PINNs for higher-order PDEs is significantly more challenging and unstable \cite{basir2022investigating,song2024does}. This limitation is further exacerbated in regimes where auto-diff is no longer applicable. For instance, standard PINN formulations cannot directly handle nonlocal operators, including fractional-order Laplacian, and therefore must switch to alternative discretization strategies, including quadrature-based finite difference schemes \cite{pang2019fpinns,pang2020npinns} or Monte Carlo-based methods \cite{guo2022monte}. This makes it difficult for deep neural network architectures to efficiently solve nonlocal equations. 

\subsubsection{Kernel-based and RBF approaches}
Our approach is closely related to kernel-based and radial basis function (RBF) methods, in that it also allows for adaptive selection of RBFs. RBF methods for solving PDEs have a long history, dating back at least to the work of Kansa \cite{kansa1990multiquadrics}.
 A key result in this area is that kernel functions induce a reproducing kernel Hilbert space (RKHS), or native space, providing a rigorous functional-analytic framework for approximation \cite{wendland2004scattered}. For instance, the RKHS associated with Mat\'ern kernels coincide with Sobolev spaces. This property is thus appealing in the PDE context, where the regularity of the solution is often known a priori and can be directly matched by an appropriate choice of kernel.

The connection between kernel-based approaches in machine learning and numerical methods for PDEs was insightfully highlighted in the survey of \cite{schaback2006kernel}.
More recently, kernel methods have been extensively studied and extended to solving nonlinear PDEs \cite{batlle2025error,chen2021solving,li2024parameter}. 
Kernel and RBF methods are also especially well suited to nonlocal and fractional-order equations, owing to the availability of quasi-analytical expressions for fractional differential operators acting on RBFs \cite{burkardt2021unified,hao2025fractional,zhuang2022radial} (see also \Cref{sec:CompDiffOp}). Despite these advantages, kernel methods face two persistent challenges: choosing suitable kernels and scales, and the severe ill-conditioning incurred as the number of centers grows. Recent efforts to address these issues can be found in \cite{chen2025sparse,nelsen2025bilevel}. Our method differs from these approaches. In particular, we allow the adaptive selection of both kernel centers and scales while keeping a sparse structure. Moreover, the associated function space is no longer a Hilbert space, but instead a Banach space. Notably, we show that for a broad class of kernels, this space is invariant with respect to the specific kernel choice (see \Cref{sec:functionspace}).

\subsubsection{Adaptive methods and greedy algorithms}
Unlike methods with fixed network architectures or kernel counts, our approach is inherently adaptive.
Related adaptive strategies have also been explored in the literature, most notably through residual-based adaptive training \cite{botvinick2025ab,huang2026adaptive,liu2022adaptive}.
A closely related line of work is greedy algorithms for PDEs
\cite{siegel2023greedy},
but our method differs in several key aspects. First, we incorporate an $\ell_{1}$ sparsity-promoting regularization to enforce sparse representations of the solution. Second, instead of performing an exact greedy selection step, which is typically an intractable global optimization problem, we adopt a relaxed strategy for selecting inner weights. Lastly, the sparsity structure also allows us to jointly optimize both the inner and the outer weights, rather than freezing the inner parameters at each iteration (see \Cref{sec:algo,sec:num} for the benefits of jointly optimizing the inner and outer weights). 
Despite these algorithmic differences, the two approaches share a common functional-analytic foundation. In particular, the solution spaces in both frameworks can be characterized as variation spaces (see \Cref{sec:functionspace}), which play a central role in the underlying approximation theory. 
For convergence results of greedy-type regression algorithms, we refer readers to \cite{barron2008approximation,devore1996some,hnatiuk2025lazifying,li2024entropy,siegel2022optimal} and the references therein.

\subsubsection{Random feature models}
Random feature models have been proposed in both kernel methods and neural networks with randomly chosen inner weights \cite{bach2023relationship,chen2022bridging,dong2021local,he2025can,liu2021random,RahimiRecht:08,zhang2025structured,zhang2023transnet,liao2024solving}. More recently, \cite{liu2025integral} establishes the optimal convergence rates of linearized shallow neural networks in Sobolev spaces with random features and $\operatorname{ReLU}^{k}$ activations, showing that they achieve the same asymptotic rates as nonlinear approximations (albeit over a larger function class) \cite{siegel2024sharp}. On the other hand, accurately approximating functions with random feature models may require a large number of features \cite{pieper2024nonuniform}. This naturally raises questions concerning the trade-off between accuracy and computational efficiency, as well as the necessity of adaptive algorithms. In \Cref{sec:fixedNNcomparison}, we conduct a detailed error and runtime analysis of our algorithm in comparison with random feature models under a fixed budget. 

\subsection{Organization of the manuscript}
The remainder of the paper is organized as follows. We first describe the SparseRBFnet model, loss, and adaptive optimization scheme (\Cref{sec:net}). We then characterize the induced infinite-width function spaces and establish the relevant equivalences (\Cref{sec:functionspace}). Next, we develop quasi-analytical kernel calculus for computing both differential and nonlocal operators (\Cref{sec:CompDiffOp}). We subsequently discuss algorithmic variants and implementation heuristics (\Cref{sec:algo}). Finally, we present numerical experiments illustrating the performance of the method on high-order, fractional, and anisotropic PDE benchmarks (\Cref{sec:num}), and make concluding remarks (\Cref{sec:conclusion}).

\noindent\textit{Summary of Contributions}. Our main contributions are summarized as follows. 
\begin{itemize}[left=1pt, label={\footnotesize$\bullet$}]
    \item We provide a theoretical characterization of the solution space induced by SparseRBFnet. Under mild assumptions on the kernel family, this space admits a description in terms of Besov spaces. Remarkably, this characterization is independent of the specific kernel choice, in contrast to classical RKHS theory, where the function space depends sensitively on the kernel.
    \item We develop an explicit operator calculus for PDEs. For integer-order differential operators, this calculus is analytical. For fractional-order operators, it takes a quasi-analytical form. The explicit kernel structure enables efficient evaluation of both local and nonlocal operators, which are difficult to handle using the standard PINNs.
    \item We study the training algorithm in detail and analyze the role of its key components. We show that adaptive kernel insertion is essential for achieving high accuracy and sparsity, and consistently outperforms fixed-width networks and random feature models. We also demonstrate that second-order optimization is critical for avoiding severe weight condensation, where kernel centers cluster and degrade approximation quality and numerical stability.
    \item We present extensive numerical experiments on a broad range of benchmark problems. These include high-order PDEs, fractional Poisson equations, and viscous (anisotropic) Eikonal equations.  Across the tested kernel families, Gaussian kernels consistently offer the best performance in terms of accuracy and robustness. 
    We also demonstrate that the performance of our method depends only mildly on the order of the PDE.
    The results illustrate the trade-offs among accuracy, sparsity, and computational cost, and demonstrate the broad applicability of the proposed framework.
\end{itemize}

\section{Network Architecture and Optimization Schemes}
\label{sec:net}
In this section, we briefly recap the problem setup and numerical approach developed in \cite{shao2025solving}, and provide necessary generalizations of the setup.

\subsection{Problem setup}

The PDEs considered in this paper are defined in
a bounded open set $D\subset \R^{d}$
and subject to “boundary conditions” on $\Upsilon\subseteq D^c$ of the following form: 
\begin{equation}
    \begin{cases}
\cE u(x) = 0,\quad & x \in D,\\
\cB u(x) = 0, \quad  &x \in \Upsilon.
\end{cases}
\end{equation}
Specifically, $\Upsilon = \partial D$ for standard PDEs and $\Upsilon = D^{c}$ for fractional-order problems such as the fractional Poisson equation. We further assume $\cE$ and $\cB$ are dependent on a series of linear operators given as
\begin{equation}
\begin{aligned}
    \cE[u](x) = \hat{E}(x, L_1[u](x), \dots, L_{N}[u](x))\\
    \cB[u](x) = \hat{B}(x, L_1[u](x), \dots, L_{N}[u](x))
\end{aligned}
\end{equation}
where $\hat{E}$, $\hat{B}$ are a nonlinear functions and $[L_i]_{i=1}^{N}$ are linear operators. 
We further assume $\hat{E}$ and $\hat{B}$ are bounded and measurable in $x$, and continuously differentiable in their remaining arguments with derivatives uniformly bounded (in $x$).

\noindent \textbf{Example.} Let $\epsilon \in \R$ and $\beta$ in $(0, 2) \cup\N$, we consider $\cE$ and $\cB$ defined by
\begin{equation}
\begin{cases}
    \cE[u](x) =  \epsilon (-\Lap)^{\beta}u(x)+ f( \nabla u(x), u(x), x) \\[0.2em]
    \cB[u](x) = u(x) - g(x)
\end{cases}.
\end{equation}
In this case, we have $N=3$ and  
\begin{equation}
    (L_{1}[u], L_{2}[u], L_{3}[u]) = ((-\Lap)^{\beta}u, \nabla u, u)
\end{equation}
and accordingly $\hat{E}: D \times \R\times  \R^{d} \times \R  \rightarrow \R$ and $\hat{B}: \Upsilon \times \R\times  \R^{d} \times \R \rightarrow \R$ are given by
\begin{equation}
    \begin{cases}
        \hat{E}(x, \cK, g, u) = \epsilon \cK + f(g, u, x)\\
        \hat{B}(x, \cK, g, u) = u - g(x)
    \end{cases}.
\end{equation}
where $f: \R^{d}\times \R\times \R^{d}\rightarrow \R$ is some nonlinear function.

\subsection{Network Design}
We consider Radial Basis Function (RBF) networks defined as
\begin{equation}
\label{eq:discrete_network}
    u_{c, \omega} (x) = \sum_{n=1}^{N} c_n \varphi(x; \omega_n),
\end{equation}
where $c= \{c_n\}_{n=1}^{N}\subseteq \R$ denotes the outer weights, and  $\omega = \{\omega_{n}\}_{n=1}^{N} \subseteq \Omega$ are the inner weights, with $\Omega$ being a prescribed parameter space. $N$ is also referred to as the network width or the number of neurons. 
The feature function $\varphi$ in this paper is defined by radial basis functions (RBFs) of the form 
\begin{equation}
\label{eq:kernelscaling}
    \varphi(x;\; y, \sigma) = \sigma^{s-d}\phi\left(\frac{|x - y|}{\sigma}\right) =  \sigma^{\gamma}\phi\left(\frac{|x - y|}{\sigma}\right).  
\end{equation}
Here, $s = d + \gamma > 0$ is a key scaling parameter that determines the properties of the function space associated with the RBF networks (see \Cref{sec:functionspace}).
In this case, the inner weights $\{ \omega_{n} = (y_n, \sigma_n) \in \R^{d} \times \R_{+} \}$ represent the centers and shapes of the RBFs. 
Throughout this paper, we assume $\Omega = \R^d\times (0, \sigma_{\max} ] $ for some $\sigma_{\max} > 0$ for the purpose of analysis. 
For practical implementation, we use a large enough compact set $\Omega_{c} =  D_1 \times [\sigma_{\min}, \sigma_{\max}] \subseteq \Omega$, where $D_1\supseteq D$, to select the inner weights.

As is widely adopted in related works, we define a measure \(\nu_D\) on \(D\) and \(\nu_{\partial D}\) on its boundary and consider the squared residual loss function 
\begin{align}
    \label{eq:loss}
    L(u) &=\frac{1}{2} \norm{\cE[u] }^2_{L^2(\nu_D)} + \frac{\lambda}{2} \norm{\cB[u]}^2_{L^2(\nu_{\partial D})}
\end{align}
where \(\lambda\) is an appropriate penalty parameter. 
In practice, we use the empirical loss function $\hat{L} = \hat{L}_{K_1, K_2}$ defined as 
\begin{equation}
\label{eq:loss_discrete}
    \hat{L}(u) =  \frac{1}{2} \sum_{k=1}^{K_1} w_{1,k}(\cE[u](x_{1,k}))^2 + \frac{\lambda}{2} \sum_{k=1}^{K_2}w_{2,k}(\cB[ u](x_{2,k}) )^2, 
\end{equation}
where $\{x_{1,k}\}_{k=1}^{K_1} \subseteq D$ and $\{x_{2,k}\}_{k=1}^{K_2} \subseteq \Upsilon$. 
For stable evaluation of $\cE[u]$ and $\cB[u]$ and subsequent optimization, we require $\gamma > \beta$ in prefactor $\sigma^{\gamma}$ to compensate for the $\sigma^{-\beta}$ growth induced by applying the operators, where $\beta$ denotes the largest order of differentiation among all linear operators $L_{i}$ (see Proposition 6 of \cite{shao2025solving} for the theoretical justification; a toy example illustrating this necessity is presented in \Cref{sec:algo}).

The neural network training is based on a sparse minimization problem with an $\ell_{1}$ regularization term:
\begin{equation}
\tag{$P_{N}^{\text{emp}}$}
\label{eq:emperical_sparse_min_discrete}
    \min_{N, c, \omega }   \hat{L}(u_{c, \omega}) + \alpha \|c\|_{1}
\end{equation}
Unlike most existing approaches, we do not fix the network width $N$ a priori; instead, it is treated as part of the optimization problem itself. Since $N$ is allowed to vary, a suitable notion of infinite-width networks is required, which is introduced in \Cref{sec:functionspace}. In \cite{shao2025solving},
a functional-analytic framework associated with infinite-width networks is developed, and the existence of a finite-width minimizer for the infinite-width formulation of the empirical problem \eqref{eq:emperical_sparse_min_discrete} is established. This result provides a rigorous justification for the adaptive sparsification strategy adopted in our method. We refer interested readers to \cite{shao2025solving} for a detailed theoretical analysis.

\subsection{Three-Phase optimization strategy}

We employ a Three-phase iterative optimization algorithm that augments standard gradient-based weights optimization with explicit kernel insertion and deletion steps, thereby dynamically adjusting the network width $N$. Below we provide a brief overview of the algorithm; a complete description can be found in Section 3 of \cite{shao2025solving}.

At each iteration, the following 3 phases are executed consecutively:\\[0.5pt]

\noindent\textbf{Phase I:} New kernel nodes are inserted in a greedy fashion, guided by a dual variable that quantifies the violation of the optimality conditions of the current sparse approximation.

\noindent\textbf{Phase II:} One step of semi-smooth Gauss-Newton optimization of $c, \omega$ ($N$ fixed) with objective
    \[
         \hat{\ell}(c, \omega) + \beta\|c\|_{1},
    \]
    where 
    \[
    \hat{\ell}(c, \omega) = \hat{L}(u_{c, \omega}).
    \]
    The $\ell_{1}$ regularization promotes sparsity in the outer weights by sending some of them to zero, and semi-smooth calculus is thus required to handle its non-smoothness.

\noindent\textbf{Phase III:} Remove kernel nodes with $c_n = 0$.\\[0.5pt]

As in most PDE learning methods, the dominant computational overhead arises from the weight optimization step in Phase II, which requires gradient evaluation ($\nabla_{c, \omega} \hat{\ell}(u_{c, \omega})$). Existing approaches whose ansatz functions are prescribed as MLP typically rely on auto-differentiation provided by machine learning backends such as PyTorch, TensorFlow, or JAX. While convenient, this strategy often produces extremely large computational graphs, particularly for high-order differential operators in high physical dimensions, leading to substantial memory overhead and degraded numerical stability. Moreover, auto-differentiation is limited to integer-order differential operators and does not naturally extend to formulations involving nonlocal or fractional-order operators, such as fractional Laplacians. 

Our framework, in contrast, exploits the closed-form analytical expressions for RBF kernels and their derivatives. This allows us to evaluate differential operators in a quasi-analytic manner, avoiding large computational graphs altogether and enabling stable treatment of fractional-order operators. We elaborate on these advantages and their practical implications in \Cref{sec:CompDiffOp}.

\section{Kernels and Function spaces}
\label{sec:functionspace}

An important feature of our approach is that the network width $N$ is not fixed a priori. 
Therefore, when we consider the limiting behavior of the network as $N\to\infty$, 
In particular, a generalization of \eqref{eq:discrete_network} to an {\it integral neural network} (\cite{bach:2017,bengio2005convex,pieper2022nonconvex,rosset2007}) is given by
\[
u_{\mu}(x) = \int_\Omega \varphi(x;\omega)\de\mu(\omega), 
\]
for $\mu\in M(\Omega)$, the space of finite signed Rodon measures on $\Omega$.
Such functions form a Banach space $\cV_\varphi^{\rm{meas}}(D)$, equipped with the norm induced by the total variation of measures:
\[
\norm{u}_{\cV_\varphi^{\rm{meas}}(D)}
= \inf \left\{\norm{\mu}_{M(\Omega)} \;\big|\; \mu \in M(\Omega)\colon u_\mu= u \text{ on } D\right\}.
\]
Here $D\subseteq \R^d$ is a spatial domain. 
The notation $\cV_\varphi^{\rm{meas}}$ emphasizes the dependence of the space on the feature function $\varphi$ and the fact that it is defined via measures.
In the literature, this function space is referred to as a reproducing kernel Banach space (RKBS) (\cite{bartolucci2023understanding}) associated with the feature map $\varphi$.
 It is also closely related to Barron spaces (\cite{wojtowytsch2022representation,e2022barron}).

Another closely related concept in the literature is the variation space (\cite{bach:2017,barron1993universal,barron2008approximation,siegel2023characterization}), 
which can be viewed as the closure of finite-width networks under an appropriate norm.
In particular, assume that the dictionary $\{ \varphi(\cdot; w) \colon w \in \Omega\}$ is included in an ambient Banach space $X$. 
Define the atomic unit ball 
   \[
   \cA_\varphi = \left\{ f: f = \sum_{n=1}^N a_n \varphi(\cdot; \omega_n),\;  N\in\N, \; \sum_{n=1}^N |a_n| \leq 1 \right\}.
   \]
   One can take the closure of $\cA_\varphi$ under the norm of $X$, denoted by $\overline{\cA_{\varphi}}:= \overline{\cA_{\varphi}}^{\|\cdot\|_X}$.
    The variation space $\cV_\varphi^{\rm{atom}}(D)$ is then defined as 
    all functions that can be scaled into $\overline{\cA_{\varphi}}$. 
More precisely, the Minkowski functional of $\overline{\cA_{\varphi}}$ defines a norm on $\cV_\varphi^{\rm{atom}}(D)$, i.e., for all $f \in \cV_\varphi^{\rm{atom}}(D)$,
\[
\| u \|_{\cV_\varphi^{\rm{atom}}(D)} = \inf \left\{ t > 0: u \in t \; \overline{\cA_{\varphi}}^{\|\cdot\|_X} \right\}.
\]
The notation $\cV_\varphi^{\rm{atom}}$ emphasizes the fact that it is defined via closure of finite atomic combinations.

The main goal of this section is to establish the equivalence of the three spaces $\cV_\varphi^{\rm{meas}}(D)$, $\cV_\varphi^{\rm{atom}}(D)$, 
and a Besov space $B^{s}_{1,1}(D)$ for an appropriate choice of the ambient space $X$ and under suitable assumptions on the kernel $\phi$.
This is important, as it provides a precise characterization of the function space induced by the RBF network ansatz.
In addition, different from existing results on Reproducing Kernel Hilbert Spaces (RKHS), which depends specifically on the kernel choice, 
here our result reveals a universal structure of the function space independent of the RBF choice. 
Our numerical results in \Cref{sec:num} also confirm that different RBFs lead to similar approximations.
Now we recall the definition of Besov spaces. Let $D \subseteq \R^d$ be a bounded Lipschitz domain.
On bounded Lipschitz domains, there are several equivalent definitions of Besov spaces. Here we adopt the integral form definition based on difference operators (see \cite[Theorem 1.118]{triebel2006}).
The Besov space $B^{s}_{1,1}(D)$ for some \(s>0\) is the set of all functions $u \in L^{1}(D)$ such that
\begin{equation}
\label{eq:BesovNormIntegralForm}
\| u \|_{B^{s}_{1,1}(D)} = \| u \|_{L^{1}(D)} + \int_0^1 t^{-s-1} \sup_{|h|\leq t}\| \Delta_{h,D}^{m} u \|_{L^{1}(D)} dt < \infty,
\end{equation}
    where $m = \lfloor s \rfloor +1$ is an integer, $\Delta_{h,D}^{m}$ is the $m$-th order difference operator with step size $h$ on the domain $D$.
By \cite[Theorem 1.105]{triebel2006} (see also \cite{rychkov1999restrictions}), there exists a bounded linear extension operator from $B^{s}_{1,1}(D)$ to $B^{s}_{1,1}(\R^d)$.
  This point is crucial, as it allows one to work on the space $\R^d$, which is necessary for applying the Littlewood--Paley characterization of Besov spaces.
    
  We now state the main assumption on the radial profile of the kernel $\phi$.   With a slight abuse of notation, we also denote by $\widehat{\phi}$ the Fourier transform of the radial function $x \mapsto \phi(|x|)$. 

  \begin{assumption}[Assumption of the kernels]
    \label{assu:kernels}
   We assume that $\phi$ satisfies the following conditions:
   \begin{enumerate}
    \item $\phi$ is continuous and the map $x \mapsto \phi(|x|)$ is in $L^p(\R^d)\cap B^s_{1,1}(\R^d)$, where $p\in [1,\infty]$.
    \item The Fourier transform $\widehat{\phi}$ of the map $x\mapsto \phi(|x|)$ satisfies $\widehat{\phi} \in W^{k,1}(\R^d)$ for some $k>d$, and that $\widehat{\phi}(\xi) \geq c > 0$ for $\xi\in B_r(0)$ for some $r>0$.
   \end{enumerate}
    \end{assumption}
\begin{remark}
    Note that the ranges \(p \geq 1\) and \(s > 0\) given above are sufficient to show the equivalence of norms considered in this section. For the general approach, stronger regularity is needed. Since we want to take derivatives in the collocation points, we need to require \(s > d + \beta\), where \(\beta \geq 0\) is the maximum differentiation order, as motivated above. This implies that the functions in \(B^s_{1,1}(\R^d)\) including their \(\beta\)-order derivatives are continuous functions. Since this regularity needs to be imposed on the underlying kernel functions \(x \mapsto \phi(|x|)\), this implies additional restriction on the choice of kernels in practice.
\end{remark}

We note that the index $p$ in Assumption \ref{assu:kernels}(1) determines the choice of the ambient space $X$, and it is always possible to choose \(p=1\) due to the definition of the Besov norm.
We will see in the following that $X = L^p(D)$ is a natural choice for the ambient space, which leads to 
a continuous map $\Phi: \Omega \to X$ defined by $\Phi(\omega) = \varphi(\cdot; \omega)$ under an additional assumption on the relation between $s$ and $p$.
In the following, 
we give some examples of kernels that satisfy Assumption \ref{assu:kernels}.\\


    \noindent \textbf{Example 1} ({\it Gaussian})  Let 
    \begin{equation}
        \phi(\rho) = \exp\left(-\frac{\rho^{2}}{2}\right).
    \end{equation}
    It is easy to check that the Gaussian kernel satisfies Assumption \ref{assu:kernels}(1) for all $s > 0$ and $1 \leq p \leq \infty$, and Assumption \ref{assu:kernels}(2) for all $k > d$ and $r>0$.\\  

    \noindent \textbf{Example 2} ({\it {\it Mat\'ern}}) Let \begin{equation}
    \phi(\rho) = \frac{2^{1-\nu}}{\Gamma(\nu)}\left(\sqrt{2\nu}\rho\right)^{\nu} K_{\nu}\left(\sqrt{2\nu}\rho\right),
    \end{equation}
    with $\nu > d/2$, where $K_{\nu}$ is the modified Bessel function of the second kind.  
 For Assumption  \ref{assu:kernels}(1), notice that standard asymptotics of $K_\nu$ give $\phi(0)=1$ and decays like $\rho^{\nu-1/2}e^{-\sqrt{2\nu}\,\rho}$ at infinity, so $x\mapsto\phi(|x|)\in L^p(\mathbb{R}^d)$ for every $1\le p\le \infty$. Moreover, the Fourier transform is given by 
 \[
\widehat{\phi}(\xi)=C_{\nu,d}\,\bigl(2\nu+|\xi|^2\bigr)^{-(\nu+d/2)}.
\]
Using Littlewood--Paley characterization of Besov spaces, we have $x\mapsto\phi(|x|)\in B^s_{1,1}(\R^d)$ for any $0<s<2\nu$. 
Additionally, from $\widehat{\phi}$, for $\alpha$ being a multi-index, we have
\[
|\partial^\alpha_\xi \widehat{\phi}(\xi)| \lesssim (1+|\xi|)^{-2\nu-d-|\alpha|}. 
\]
So $\widehat{\phi}\in W^{k,1}(\mathbb{R}^d)$ for any integer $k> d$.
Also, it is obvious that there exist $r>0$ and $c>0$ such that $\widehat{\phi}(\xi)\geq c$ for all $\xi\in B_r(0)$. 
Finally, we note that $p\in[1,\infty]$, one can pick $s$ satisfying $d(1-1/p)<s<2\nu$ when $\nu>d/2$. \\

\noindent \textbf{Example 3} ({\it Inverse multiquadrics})  
Let 
\begin{equation}
\phi(\rho)=(1+\rho^2)^{-\beta}
\end{equation}
with $\beta>d/2$.
First of all, it is obvious that $x\mapsto\phi(|x|)\in L^p(\mathbb{R}^d)$ for every $1\le p\le \infty$ with $\beta>d/2$. In addition, the Fourier transform is given by 
\[
\widehat{\phi}(\xi)=C_{d,\beta}\,|\xi|^{\beta-d/2}\,K_{\beta-d/2}(|\xi|).
\]
By the exponential decay of $K_{\beta-d/2}(|\xi|)$ as $|\xi|\to \infty$, we know the map is in $B^s_{1,1}(\R^d)$ for every $s>0$. Similarly, one can check that Assumption \ref{assu:kernels}(2) holds for all $k>d$ and some $r>0$. \\

\noindent\textbf{Example 4} (\textit{Wendland}). 
Let
\begin{equation}
\phi(\rho) = (1-\rho)_{+}^{\,m}\, P_{d,\ell}(\rho),
\end{equation}
where
\[
m := \left\lfloor \frac{d}{2} \right\rfloor + \ell + 1,
\qquad
P_{d,\ell}(\rho)
=
\sum_{k=0}^{\ell}
\binom{\ell}{k}
\binom{m+\ell+k}{\ell}
\, \rho^{k}.
\]
With this choice, $\phi \in C^{2\ell}(\mathbb{R}_{+})$ and $\supp \phi \subset [0,1]$. Since $x \mapsto \phi(|x|)$ is bounded and compactly supported, it belongs to
$L^{p}(\mathbb{R}^{d})$ for all $1 \le p \le \infty$.
Moreover, $\phi(|\cdot|) \in W^{2\ell,1}(\mathbb{R}^{d})$, and hence, by the
Sobolev--Besov embedding,
$
\phi(|\cdot|) \in B^{s}_{1,1}(\mathbb{R}^{d})
$ 
for any 
$0 < s < 2\ell$
and  Assumption~\ref{assu:kernels}(1) is satisfied.
Furthermore,
$
\widehat{\phi}(0)
=
\int_{\mathbb{R}^{d}} \phi(|x|)\,dx
>
0
$.
By continuity of $\widehat{\phi}$, there exist constants $r>0$ and $c>0$ such that
$
\widehat{\phi}(\xi) \ge c
\text{ for all } \xi \in B_r(0)
$.
Finally, for any fixed integer $k>d$, choosing $\ell$ sufficiently large (for example,
such that $2\ell > d + k$) ensures that
$
\widehat{\phi} \in W^{k,1}(\mathbb{R}^{d}),
$
and hence Assumption~\ref{assu:kernels}(2) holds.\\

    The main result of this section is the following theorem of the equivalence of the three spaces. For ease of reading, the proof of \Cref{thm:equivalence} is deferred to Appendix \ref{app:thmproof}. We also note that \cite{siegel2023characterization} derives integral formulations for certain variation spaces under broad abstract assumptions.
    Our result offers a more precise characterization of $\cV_\varphi^{\rm{atom}}(D)$ in the specific ambient space $X=L^p(D)$.
    We note that Besov space characterizations of spaces generated by superposition of Gaussian bumps have
been given in the literature. For instance, \cite[Section~6.7]{meyer1992wavelets} provides a characterization of the ``hump algebra'', which is related to $\cV_\varphi^{\rm{atom}}(\R^d)$ with Gaussian kernel and \(s = d\). The elements of the algebra are characterized in terms of their wavelet coefficients, which directly relate to the wavelet coefficient characterization of Besov spaces \cite[Section~6.10]{meyer1992wavelets}. To the best of the
authors’ knowledge, a precise equivalence theorem under conditions comparable to those considered here has not been established in the literature.

     \begin{theorem}
        \label{thm:equivalence}
        Let $D$ be a bounded Lipschitz domain in $\R^d$ and $X = L^p(D)$ for some $1 \leq p \leq \infty$. 
       Assume Assumption \ref{assu:kernels} holds, and $$s > d\left(1 - \frac{1}{p}\right).$$
Then, the three spaces $\cV_\varphi^{\rm{meas}}(D)$, $\cV_\varphi^{\rm{atom}}(D)$, and $B^{s}_{1,1}(D)$ are equivalent. 
  \end{theorem}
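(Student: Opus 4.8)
The plan is to treat the measure space $\cV_\varphi^{\rm{meas}}(D)$ as the hub and establish two norm equivalences, $\cV_\varphi^{\rm{atom}}(D)\cong\cV_\varphi^{\rm{meas}}(D)$ and $\cV_\varphi^{\rm{meas}}(D)\cong B^s_{1,1}(D)$, the latter by proving both continuous inclusions separately. Throughout I would pass to $\R^d$: by the bounded extension operator $E\colon B^s_{1,1}(D)\to B^s_{1,1}(\R^d)$ quoted after \eqref{eq:BesovNormIntegralForm} together with boundedness of the restriction $B^s_{1,1}(\R^d)\to B^s_{1,1}(D)$, it suffices to prove the representation statements for the full-space profile and then restrict. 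Note that $\Omega=\R^d\times(0,\sigma_{\max}]$ already allows centers anywhere in $\R^d$, so atoms centered outside $D$ are admissible, which is what makes the full-space reduction legitimate.

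For $\cV_\varphi^{\rm{atom}}(D)\cong\cV_\varphi^{\rm{meas}}(D)$ I would invoke the standard variation-space identity: the synthesis map $T\mu=\int_\Omega\varphi(\cdot;\omega)\,\de\mu(\omega)$ carries the closed unit ball of $M(\Omega)$ onto $\overline{\cA_\varphi}^{\,\|\cdot\|_X}$, so the total-variation norm and the Minkowski functional agree up to constants. The hypotheses this requires are that $\Phi\colon\Omega\to X=L^p(D)$, $\Phi(\omega)=\varphi(\cdot;\omega)$, be continuous and vanish at the ``boundary'' of $\Omega$ (as $\sigma\to0$ and as $|y|\to\infty$), i.e.\ $\Phi\in C_0(\Omega;X)$, so that $T$ is weak-$*$-to-weak continuous and discrete measures, whose images lie in $\cA_\varphi$, are norming. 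Both properties follow from the scaling estimate $\norm{\varphi(\cdot;y,\sigma)}_{L^p(D)}\le\sigma^{\gamma+d/p}\norm{\phi(|\cdot|)}_{L^p(\R^d)}$ obtained by the substitution $x=y+\sigma z$, together with the exponent identity $\gamma+d/p=s-d(1-1/p)$, which is strictly positive exactly under the hypothesis $s>d(1-1/p)$.

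For the easy inclusion $\cV_\varphi^{\rm{meas}}(D)\hookrightarrow B^s_{1,1}(D)$ I would bound the atoms uniformly in $B^s_{1,1}(\R^d)$ and then integrate. Translation invariance removes $y$, and the decisive point is that the normalization $\sigma^{s-d}$ makes the homogeneous seminorm exactly scale-invariant, $\norm{\varphi(\cdot;y,\sigma)}_{\dot B^s_{1,1}(\R^d)}=\norm{\phi(|\cdot|)}_{\dot B^s_{1,1}(\R^d)}$, finite by Assumption~\ref{assu:kernels}(1), while $\norm{\varphi(\cdot;y,\sigma)}_{L^1(\R^d)}=\sigma^s\norm{\phi(|\cdot|)}_{L^1}$ stays bounded for $\sigma\le\sigma_{\max}$ (using $B^s_{1,1}\hookrightarrow L^1$). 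Hence $\sup_\omega\norm{\varphi(\cdot;\omega)}_{B^s_{1,1}(\R^d)}<\infty$, and the Bochner/triangle inequality $\norm{u_\mu}_{B^s_{1,1}(D)}\le\int\norm{\varphi(\cdot;\omega)}_{B^s_{1,1}(D)}\,\de|\mu|(\omega)$ followed by the infimum over representing measures gives the embedding. The reverse inclusion $B^s_{1,1}(\R^d)\hookrightarrow\cV_\varphi^{\rm{meas}}$ is the core. Given $u=\sum_j\Delta_j u$ with the frequencies of $\Delta_j u$ concentrated near $2^j$, I would reconstruct each band at a single scale $\sigma_j\sim2^{-j}\le\sigma_{\max}$: since $\int\varphi(x;y,\sigma)a(y)\,\de y=\sigma^{\gamma+d}(\phi(\cdot/\sigma)\ast a)(x)$ has Fourier symbol $\sigma^{\gamma+d}\widehat\phi(\sigma\xi)$, I set $a_j:=\sigma_j^{-\gamma-d}\,m_j\ast\Delta_j u$ with $m_j=\mathcal F^{-1}[\chi(\sigma_j\cdot)/\widehat\phi(\sigma_j\cdot)]$ deconvolving $\phi$ on the active band, so that $\int\varphi(\cdot;y,\sigma_j)a_j(y)\,\de y=\Delta_j u$ and $\mu=\sum_j a_j(y)\,\de y\otimes\delta_{\sigma_j}$ represents $u$. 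The lowest frequencies are synthesized at a fixed moderate scale $\le\sigma_{\max}$, which is admissible precisely because $\widehat\phi\ge c$ on the fixed ball $B_r(0)$ rather than on all of $\R^d$, so no large scales are ever needed.

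The main obstacle is controlling $\norm{\mu}_{M(\Omega)}\le\sum_j\norm{a_j}_{L^1}$, and this is where Assumption~\ref{assu:kernels}(2) enters in full. By Young's inequality $\norm{a_j}_{L^1}\le\sigma_j^{-\gamma-d}\norm{m_j}_{L^1}\norm{\Delta_j u}_{L^1}$, so everything reduces to a \emph{scale-free} bound $\norm{m_j}_{L^1}\le C_\phi$. Writing $m_j(x)=\sigma_j^{-d}g(x/\sigma_j)$ with $g=\mathcal F^{-1}[\chi/\widehat\phi]$ gives $\norm{m_j}_{L^1}=\norm{g}_{L^1}$ independent of $j$, and $g\in L^1$ follows from a Wiener-type estimate: $\chi/\widehat\phi$ is compactly supported and lies in $W^{k,1}$ because $\widehat\phi\ge c>0$ on $\supp\chi\subset B_r(0)$ and $\widehat\phi\in W^{k,1}$, while $k>d$ yields the decay $|g(x)|\lesssim(1+|x|)^{-k}\norm{\chi/\widehat\phi}_{W^{k,1}}\in L^1(\R^d)$. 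Consequently $\norm{a_j}_{L^1}\lesssim2^{j(\gamma+d)}\norm{\Delta_j u}_{L^1}=2^{js}\norm{\Delta_j u}_{L^1}$ (using $\gamma+d=s$), and summing reproduces exactly the Littlewood--Paley Besov norm, giving $\norm{\mu}_{M(\Omega)}\lesssim\norm{u}_{B^s_{1,1}(\R^d)}$. The remaining care is to verify that the formal sum is a genuine finite signed Radon measure with absolutely convergent total variation and that $T\mu=u$ through the reproducing identity; combined with the extension operator this yields $\cV_\varphi^{\rm{meas}}(D)\cong B^s_{1,1}(D)$ and closes the chain of equivalences.
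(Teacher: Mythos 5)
Your proposal is correct in substance, and its technical core coincides with the paper's: the hard direction $B^{s}_{1,1}\hookrightarrow\cV_\varphi^{\mathrm{meas}}$ is proved in Appendix~A by exactly your construction --- Littlewood--Paley blocks $u_j$ reconstructed at the single scale $\sigma_j=2^{-j}$ through a deconvolving multiplier (the paper's $\psi_{j+1}/g_j$, your $\chi(\sigma_j\cdot)/\widehat{\phi}(\sigma_j\cdot)$), with the scale-free $L^1$ bound on its inverse Fourier transform coming from $\widehat{\phi}\in W^{k,1}$, $k>d$, and $\widehat{\phi}\geq c$ on $B_r(0)$ (the paper rescales so that w.l.o.g.\ $r\geq 4$, matching your ``moderate scale'' remark), and the summation $\|\mu\|_{M(\Omega)}\lesssim\sum_j 2^{js}\|u_j\|_{L^1}$; likewise your uniform atom bound via scale invariance of the seminorm and $\|\varphi(\cdot;y,\sigma)\|_{L^1}=\sigma^{s}\|\phi(|\cdot|)\|_{L^1}$ is the first half of the paper's third lemma. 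Where you genuinely diverge is the architecture. The paper proves a \emph{cycle} of three one-way embeddings, $B^{s}_{1,1}(D)\hookrightarrow\cV_\varphi^{\mathrm{meas}}(D)\hookrightarrow\cV_\varphi^{\mathrm{atom}}(D)\hookrightarrow B^{s}_{1,1}(D)$, and never shows $\cV_\varphi^{\mathrm{atom}}\subseteq\cV_\varphi^{\mathrm{meas}}$ directly: measure-into-atom is done by Bochner simple-function approximation (this is where $s>d(1-1/p)$ enters, via the $\sigma^{s-d+d/p}$ estimate, exactly your exponent identity), and atom-into-Besov combines the uniform atom bound with the compact embedding $B^{s}_{1,1}(D)\hookrightarrow\hookrightarrow L^p(D)$ and Fatou's lemma along an a.e.-convergent subsequence. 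You instead prove two two-sided identities, in particular $\cV_\varphi^{\mathrm{atom}}=\cV_\varphi^{\mathrm{meas}}$ via weak-$*$ compactness of the unit ball of $M(\Omega)$ and the synthesis map, and $\cV_\varphi^{\mathrm{meas}}\hookrightarrow B^{s}_{1,1}$ directly by the Bochner triangle inequality in $B^{s}_{1,1}(\R^d)$. Your route buys more structure: closedness of the image of the unit ball yields norm-attaining representing measures, and it identifies the atomic and measure spaces without passing through the Fourier-analytic Assumption~\ref{assu:kernels}(2). The paper's cycle buys economy: three embeddings instead of four, only elementary tools (simple functions, compactness, Fatou), and --- importantly --- it never needs the one hypothesis you must verify, namely $\Phi\in C_0(\Omega;X)$.

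On that point your write-up has a gloss, though not a fatal gap: the scaling estimate gives the uniform decay as $\sigma\to 0$, but it does \emph{not} give decay as $|y|\to\infty$. For $p<\infty$ you need the tail argument $\|\varphi(\cdot;y,\sigma)\|_{L^p(D)}^{p}=\sigma^{d+p\gamma}\int_{(D-y)/\sigma}|\phi(|z|)|^{p}\,dz\to 0$ uniformly over $\sigma\in(0,\sigma_{\max}]$, by absolute continuity of the integral of the fixed $L^p$ profile over sets receding to infinity; for $p=\infty$ you additionally need $\phi(\rho)\to 0$ as $\rho\to\infty$, which is not explicit in Assumption~\ref{assu:kernels}(1) and must be extracted from $\phi(|\cdot|)\in B^{s}_{1,1}(\R^d)$ with $s>d$ (Besov embedding into $C_0$). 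With these details supplied, your argument is a valid and somewhat richer alternative to the paper's proof.
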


\section{Computation of Differential and Nonlocal Operators}
\label{sec:CompDiffOp}

In this section, we explain how the closed-form structure of the RBF ansatz allows for quasi-analytical evaluation of the (pseudo)differential operator $\cE(u_{c,\omega})$ 
and how this framework naturally extends to more challenging operators. For clarity of exposition, we focus on $\cE$ and the same procedure applies verbatim to $\cB$.

We recall $\cE$ has the form
\[
    \cE[u](x) = \hat{E}(x, L_1[u](x), \dots, L_{N}[u](x))
\]
where $\hat{E}$ is a nonlinear function and $[L_i]_{i=1}^{N}$ are linear differential operators. Accordingly, by the chain rule, the gradients $\nabla_{c, \omega} \hat{\ell}(c, \omega)$ reduce to $\nabla_{c, \omega} r^{i}(c, \omega)$ where pointwise residue $r^{i}$ is defined by 
\[
    r^{i}(c, w; x) = L_{i}[u_{c, \omega}](x).
\]
Hence, evaluating $\cE[u_{c, \omega}]$ and $\nabla_{c, \omega}\hat{\ell}(c, \omega)$ reduces to pointwise evaluation of $L_{i}[u_{c, \omega}]$ and $\nabla_{c, \omega } L_{i}[u_{c, \omega}]$ (via chain rule). Recall that our ansatz takes the form of 
\[
    u_{c, \omega} = \sum_{n=1}^{N} c_n \sigma_{n}^{\gamma} \phi(\rho(x; \omega_{n})), \qquad \rho(x; \omega_{n}) = \frac{|x - y_{n}|}{\sigma_{n}}, \qquad \omega_{n} = (y_n, \sigma_{n}).
\]
By linearity, it suffices to evaluate for a single kernel,  i.e.\ $L_{i}[\phi(\rho(\cdot; \omega_{n}))](x)$. When $L_{i}$ only involves integer-order derivatives, the closed-form of  $L_{i}[\phi(\rho(\cdot; \omega_{n}))](x)$ follows directly from the chain rule (see details in Appendix \ref{app:int-order-chainrule}). As an illustrative example, we consider the polyharmonic operators   $(-\Lap)^{m}, \ m\in\N$, where we have 
\[
(-\Lap)^{m} \!\left[\phi\!\left(\rho(\cdot;\omega_n)\right)\right](x)
=
\frac{1}{\sigma_n^{2m}}\,
\big(\cK_d^{\,m}\phi\big)\!\left(\rho(x;\omega_n)\right),
\]
where radial Laplacian operator $\cK_{d}$ is defined by
\[
\cK_d[\phi](\rho)
:=
-\phi''(\rho)
-
\frac{d-1}{\rho}\phi'(\rho).
\]
For higher-order Laplacians in a high-dimensional spatial domain (e.g. $d = 4$, $m=2$), this expression reduces the evaluation of $(-\Lap)^{m}$ to a one-dimensional differential operator acting on $\phi$, thus avoiding the need for automatic differentiation. Although analogous analytical expressions also exist for MLP ansatzes, their deeply nested and highly nonlinear structure renders such derivations impractical. Consequently, MLP-based approaches typically rely on automatic differentiation, which is computationally inefficient,
memory-intensive, and numerically unstable when evaluating high-order differential operators.

While the discussion above focuses on integer-order differential operators, many PDEs of practical interest involve nonlocal operators, most notably the fractional Laplacian. Unlike integer-order operators, fractional operators do not admit a representation in terms of pointwise derivatives and are instead defined through alternative formulations (e.g., the principle value of an integral). Remarkably, the RBF ansatz remains particularly well-suited for the fractional Laplacian, owing to its explicit radial structure. The fractional order Laplacian can again be reduced to a one-dimensional operator acting on the radial profile $\phi$, as in the integer-order scenario. We elaborate on this construction and its practical implications in the following subsection.

\subsection{Quasi-analytical evaluation of Fractional Laplacians}
The (classical and) fractional Laplacian is defined via the Fourier multiplier
\begin{equation}
\label{eq:fracLap_def}
(-\Lap)^{\beta/2} u(x)
=
\mathcal{F}^{-1}\!\left[\,|\xi|^{\beta}\,\mathcal{F}[u](\xi)\,\right](x),
\qquad \beta>0 .
\end{equation}
When $\beta \in 2\N$, the operator reduces to the classical polyharmonic operator. For $\beta \in (0,2)$, it is a standard fractional Laplacian operator; see \cite{di2012hitchhikerʼs} for an introduction. For $\beta \in  (2,\infty) \backslash 2\N$, it defines a higher-order fractional Laplacian, and we refer the readers to \cite{abatangelo2021higher} for an introduction. 
The evaluation of fractional Laplacians acting on radial basis functions has been studied extensively in the literature (see, for example, \cite{burkardt2021unified,zhuang2022radial} and references therein). We summarize here only the key results relevant to our numerical scheme and refer the interested reader to the aforementioned works for detailed derivations, theoretical analysis, and implementation considerations.

For a radial profile $\phi(\rho)$ with $\rho=|x|$, the Fourier transform is also radial and admits the Hankel-type representation
\begin{equation}
\label{eq:FTRBF}
\widehat{\phi}(\tau)
=
(2\pi)^{d/2}\,
\tau^{1-d/2}
\int_{0}^{\infty}
\rho^{d/2}\phi(\rho)\,
J_{d/2-1}(\tau \rho)\,d\rho,
\qquad \tau=|\xi|,
\end{equation}
where $J_{\nu}$ denotes the Bessel function of the first kind of order $\nu$. By \eqref{eq:fracLap_def}--\eqref{eq:FTRBF} and radial inversion, there exists a one-dimensional profile $\mathcal{K}$ such that
\begin{equation}
\label{eq:fracLap_feature}
(-\Lap)^{\beta/2}\phi(x;y,\sigma)
=
\sigma^{-\beta}\,
\mathcal{K}_{d}^{\beta/2}[\phi]\!\left(\rho(x;y,\sigma)\right),
\end{equation}
where
\begin{equation}
\label{eq:fracLapRBF}
\mathcal{K}_{d}^{\beta/2}[\phi](\rho)
=
(2\pi)^{-d/2}\,
\rho^{1-d/2}
\int_{0}^{\infty}
\tau^{d/2+\beta}\,
\widehat{\phi}(\tau)\,
J_{d/2-1}(\tau \rho)\,d\tau.
\end{equation}
Differentiating \eqref{eq:fracLapRBF} with respect to $\rho$ yields
\begin{equation}
\label{eq:drfracLapRBF}
\begin{aligned}
(\mathcal{K}_{d}^{\beta/2}[\phi])'(\rho)
&=
\left(1-d/2\right)
\rho^{-d/2}
\int_{0}^{\infty}
\tau^{d/2+\alpha}\,
\widehat{\phi}(\tau)\,
J_{d/2-1}(\tau \rho)\,d\tau \\[4pt]
&\quad
+
\rho^{1-d/2}
\int_{0}^{\infty}
\tau^{d/2+\beta+1}\,
\widehat{\phi}(\tau)\,
J'_{d/2-1}(\tau \rho)\,d\tau .
\end{aligned}
\end{equation}

We note that the above formulation works for all RBFs. Although 
\Cref{eq:FTRBF,eq:fracLapRBF,eq:drfracLapRBF} do not admit a closed form for most RBFs, the resulting one-dimensional integrals can be accurately evaluated using standard numerical quadrature, justifying the term ``quasi-analytic''. Also, unlike the FDM-based or quadrature-based methods in \cite{pang2019fpinns,pang2020npinns}, the evaluation of $\cK$ is reduced to a one-dimensional integral, with mild dependence on the physical dimension. In practice, larger $d$ can require slightly finer quadrature due to slower decay on the integrand (see \cite{ye2024fast} for related studies). 
In practice, we adopt the numerical scheme developed in \cite{zhuang2022radial} to evaluate \eqref{eq:fracLapRBF}. Since prior works do not optimize the kernel parameters $\omega$ and therefore do not require $\cK'$, we either apply the same numerical scheme to \eqref{eq:drfracLapRBF} to compute $\cK'$ directly or approximate it via finite differences of $\cK$. Although the computational cost of these one-dimensional integrals is modest, it is often convenient to precompute and tabulate both $\cK$ and $\cK'$ prior to training in order to improve efficiency.

\begin{remark}
    Nonlocal operators with finite interaction ranges have also been extensively studied in the literature \cite{DDGG20,du2019nonlocal}. Their quasi-analytical evaluation can be performed in a similar manner, provided that their Fourier symbols can be computed efficiently; see, for example, \cite{du2017fast}. 
\end{remark}

\section{Algorithmic and Model Variants: a Performance Analysis}

\label{sec:algo}


In this section, we examine the computational aspects of both the network design and the associated optimization algorithms introduced above. We first compare the proposed method with several representative algorithmic and model variants, including different optimization settings, fixed-width networks, and possibly anisotropic kernel parameterizations. We then analyze the resulting computational overhead and performance trade-offs observed in practice. Building on these observations, we conclude by discussing the classes of problems for which the proposed approach is particularly effective.


\subsection{Algorithm Variants}
From a high-level viewpoint, we prescribe the ansatz as linear combinations of kernels with prefactors $\sigma^{\gamma}$. Each iteration of the training algorithm includes a $\ell^{1}$-regularized second-order Gauss-Newton step for both inner and outer weights, along with a gradient-boosting-style greedy insertion.  While this combination works well in practice, some of the components are not canonical choices and may admit alternatives. In particular, we have the following alternatives, summarized in \Cref{tab:algorithm_variants}:

\begin{itemize}[left=1pt, label={\footnotesize$\bullet$}]
\item For implementation convenience, $\sigma$ is often strictly enforced in a closed and bounded interval. Imposing \(\sigma \geq \sigma_{\min}\), allows us to alleviate the requirement $\gamma > \beta$, where \(\beta\) is the order of the differential operator on the prefactor $\sigma^{\gamma}$, as long as the underlying radial function \(x \mapsto \phi(|x|)\) is smooth enough. Since this requirement can be relaxed (as in most of the kernel methods) from a theoretical perspective, we can also employ the simple choice \(\gamma = 0\) in practice. 

\item The Gauss–Newton step can be replaced by a first-order (proximal) gradient descent update, since the underlying optimization problem is generally nonconvex.

\item Gradient boosting is not strictly necessary. As an alternative, one may insert a randomly sampled $\omega\in \Omega$ with a probability that gradually decreases as the residual becomes smaller.

\item In standard kernel methods, the inner weights are typically fixed rather than optimized. We may choose only to optimize outer weights $c$ in Phase~II with gradient boosting (Phase~I) and pruning (Phase~III) kept. Under this setting, the resulting algorithm becomes closely related to Frank-Wolfe type methods for sparse measure optimization (see, e.g. \cite{pieper2021linear}).
\end{itemize}

\begin{table}[h!]
\centering
\caption{Algorithmic Variants}
\label{tab:algorithm_variants}
\begin{tabular}{l l l}
\hline\hline
\textbf{Category} & \textbf{Ours} & \textbf{Alternative} \\
\hline
Prefactor $\sigma^{\gamma}$  & $\gamma > \text{order of operator}$ &  No prefactor ($\gamma = 0$)         \\
Optimization      & Semi-smooth Gauss-Newton & Proximal Gradient Descent  \\
Insertion Strategy   & Boosting     & Random insertion \\
Trainable Weights & $c$ and $\omega$ & $c$ only \\
\hline\hline
\end{tabular}
\end{table}
We test these alternatives on a 2-d semilinear Poisson equation, adopting a toy example in which the exact solution consists of two localized bumps (Example 1 in Section 4.1 of \cite{shao2025solving}). We have the following observations, followed by potential explanations and discussion. 
\begin{itemize}[left=1pt, label={\footnotesize$\bullet$}]
\item The prefactor $\sigma^{\gamma}$ plays a crucial role. Removing it, by setting $\gamma = 0$, leads to noticeable performance degradation.
\item When using the first-order method, the learned kernels tend to cluster. This behavior can be attributed to widely observed weight condensation effects (see \cite{xu2025overview} for related studies).
\item Gradient boosting is essential in our framework. Due to the $\ell^{1}$-regularization, outer weights $c_n$ are driven to zero once the the outer weight gradients
enter the interval $[-\alpha,\alpha]$, similarly to proximal gradient methods, and such kernels are immediately pruned to preserve sparsity. Consequently, newly inserted kernels must be initialized with outer weights of the ``correct" sign; otherwise, they are eliminated in the next pruning step. Determining this sign requires evaluating the gradient of a candidate kernel with respect to the current network, which is precisely the role of gradient boosting. As illustrated in \Cref{fig:random_insertion}, random insertion without boosting leads to degraded performance, even under favorable conditions where all inserted kernels are assigned positive signs for a nonnegative solution\footnote{This choice is admittedly optimistic, since the sign of the solution is not known a priori in practice. Nevertheless, alternative strategies (e.g., randomly assigning signs) fail to produce even a plausible solution shape.}.
\item Training only the outer weights $c$ with sparse regularization and boosting remains viable, though it results in mildly reduced accuracy compared with jointly training $(c,\omega)$. 
\end{itemize}
\begin{figure}[t]
    \centering
    
    \begin{subfigure}[t]{0.49\textwidth}
        \centering
        \includegraphics[width=\textwidth]{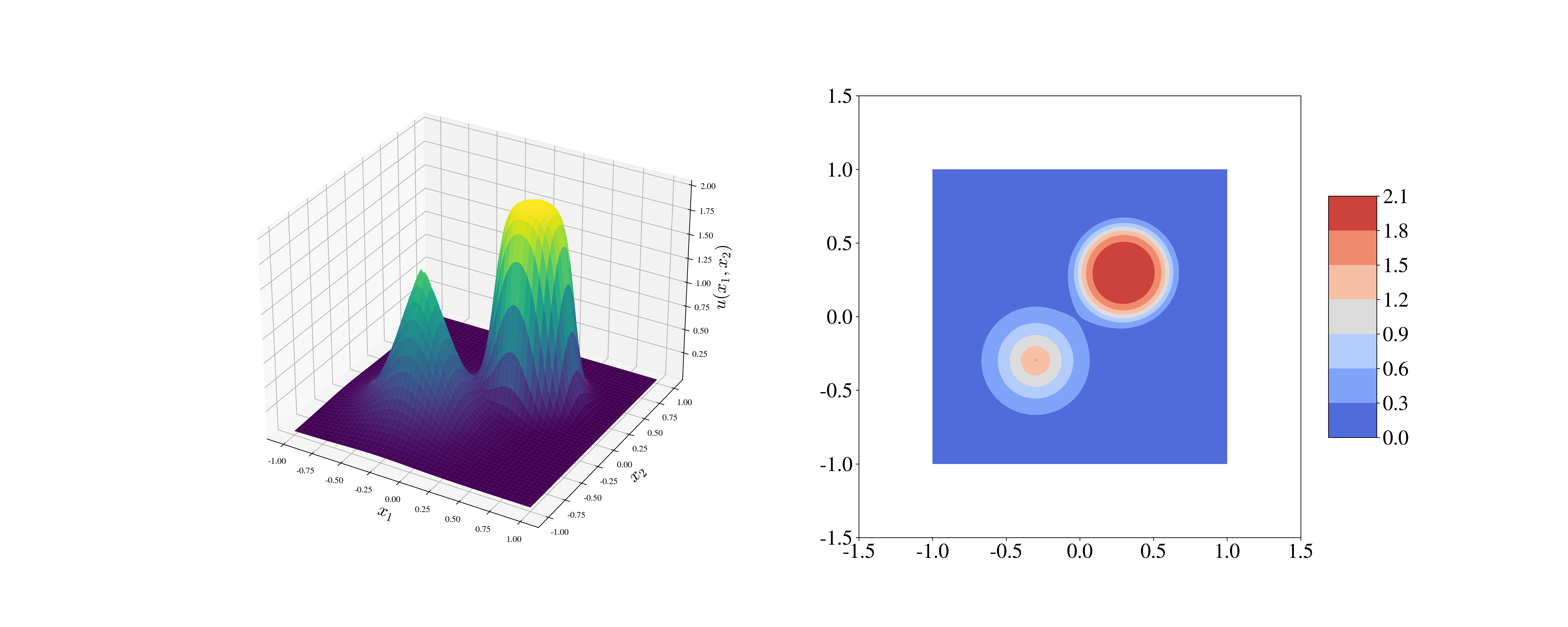}
        \caption{Exact Solution}
    \end{subfigure}
    \hfill
    \begin{subfigure}[t]{0.49\textwidth}
        \centering
        \includegraphics[width=\textwidth]{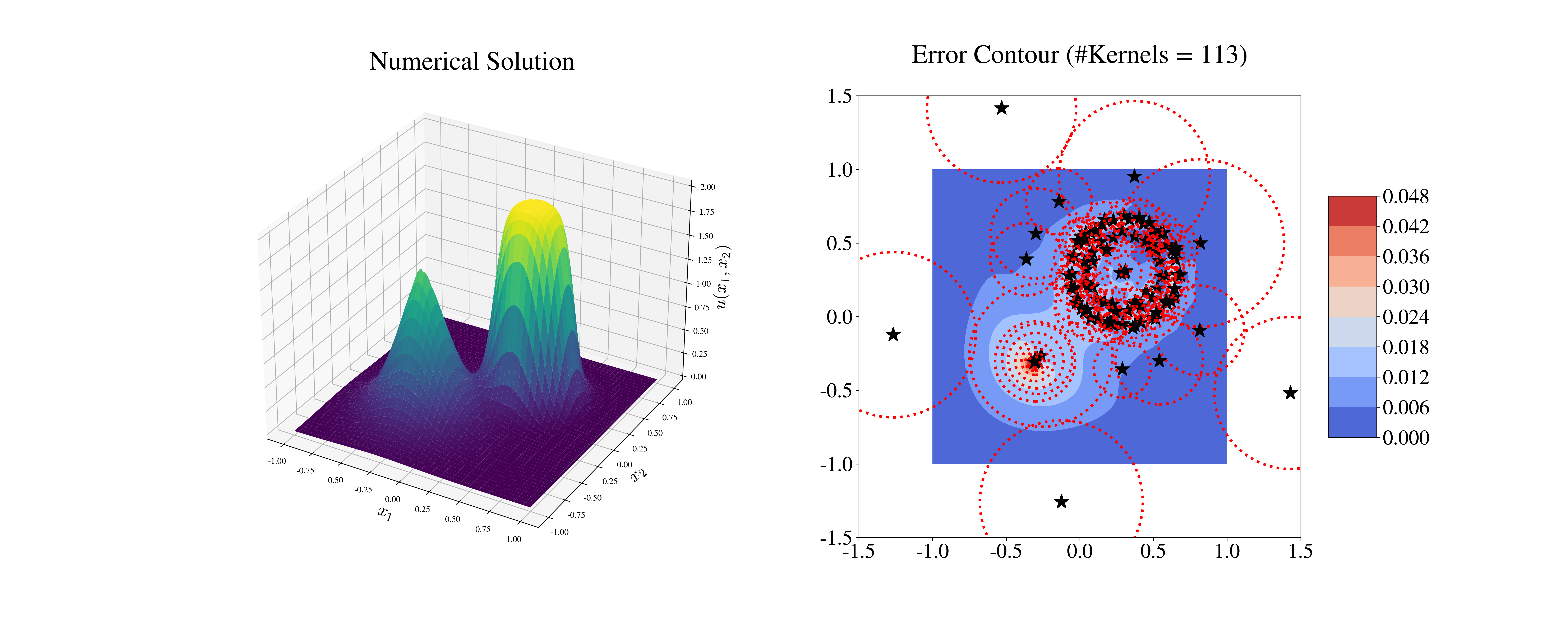}
        \caption{Second order, $c$ and $w$, gradient boosting \textbf{(ours)};\\ $\mathrm{err}_{2}^{\text{rel}} = 1.53\%$, $\#\text{params}=452$ }
    \end{subfigure}

    \begin{subfigure}[t]{0.49\textwidth}
        \centering
        \includegraphics[width=\textwidth]{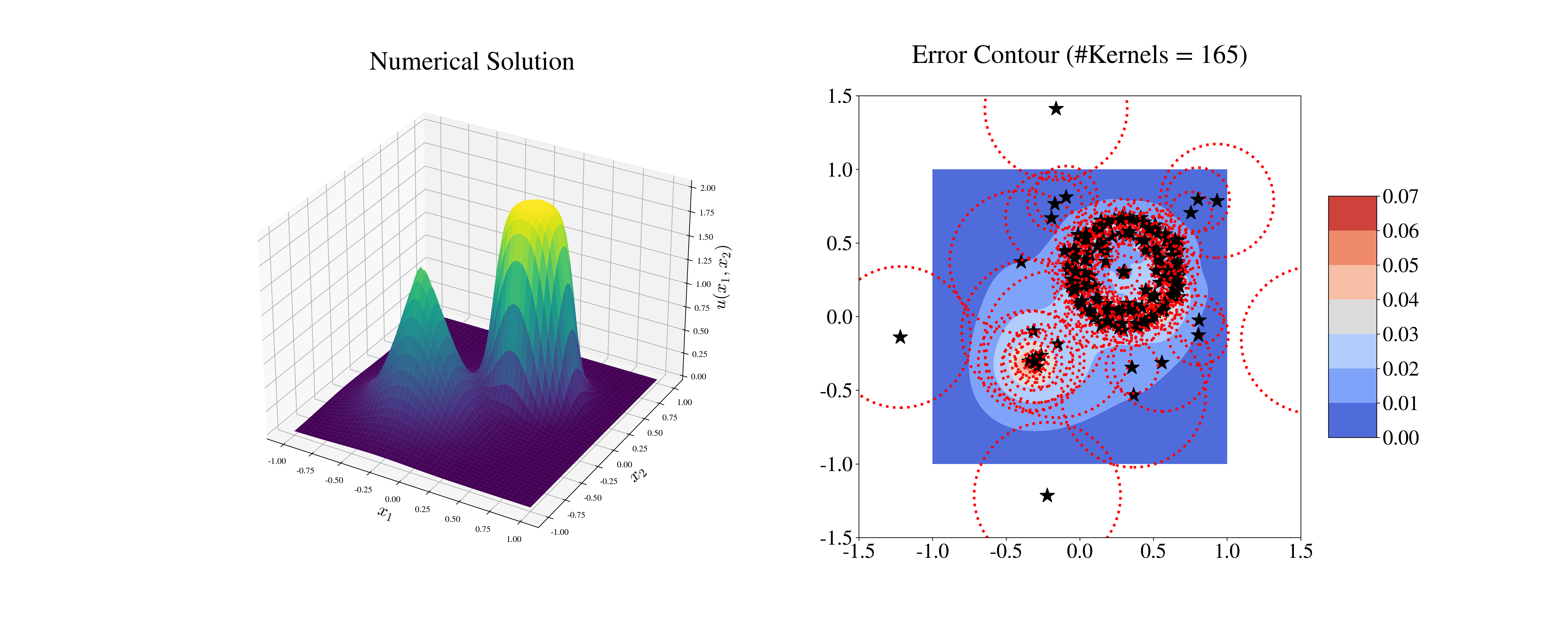}
        \caption{Second order, $c$ only, gradient boosting;\\ $\mathrm{err}_{2}^{\text{rel}} = 2.74\%$, $\#\text{params}=165$}
        \label{fig:outeronly}
    \end{subfigure}
    \hfill
    \begin{subfigure}[t]{0.49\textwidth}
        \centering
        \includegraphics[width=\textwidth]{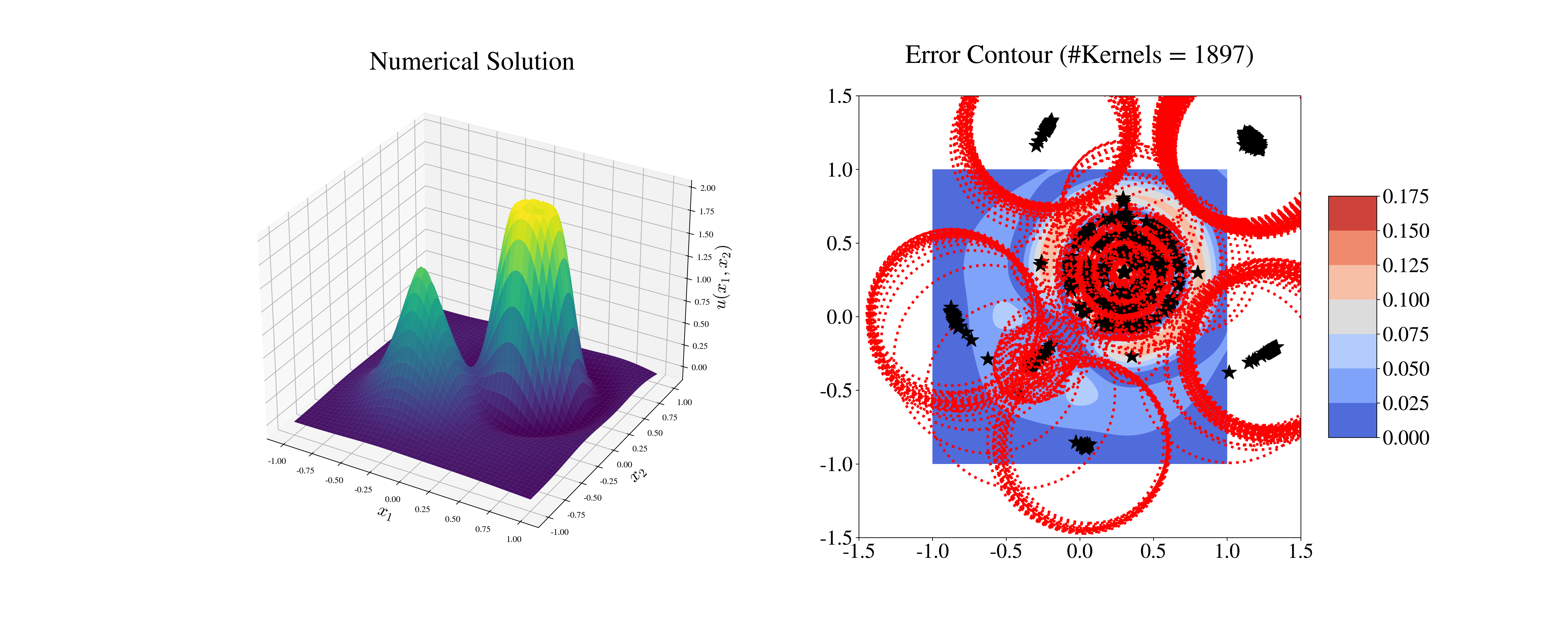}
        \caption{First order;\\ $\mathrm{err}_{2}^{\text{rel}} = 8.76\%$, $\#\text{params}=7588$}
    \end{subfigure}

    \begin{subfigure}[t]{0.49\textwidth}
        \centering
        \includegraphics[width=\textwidth]{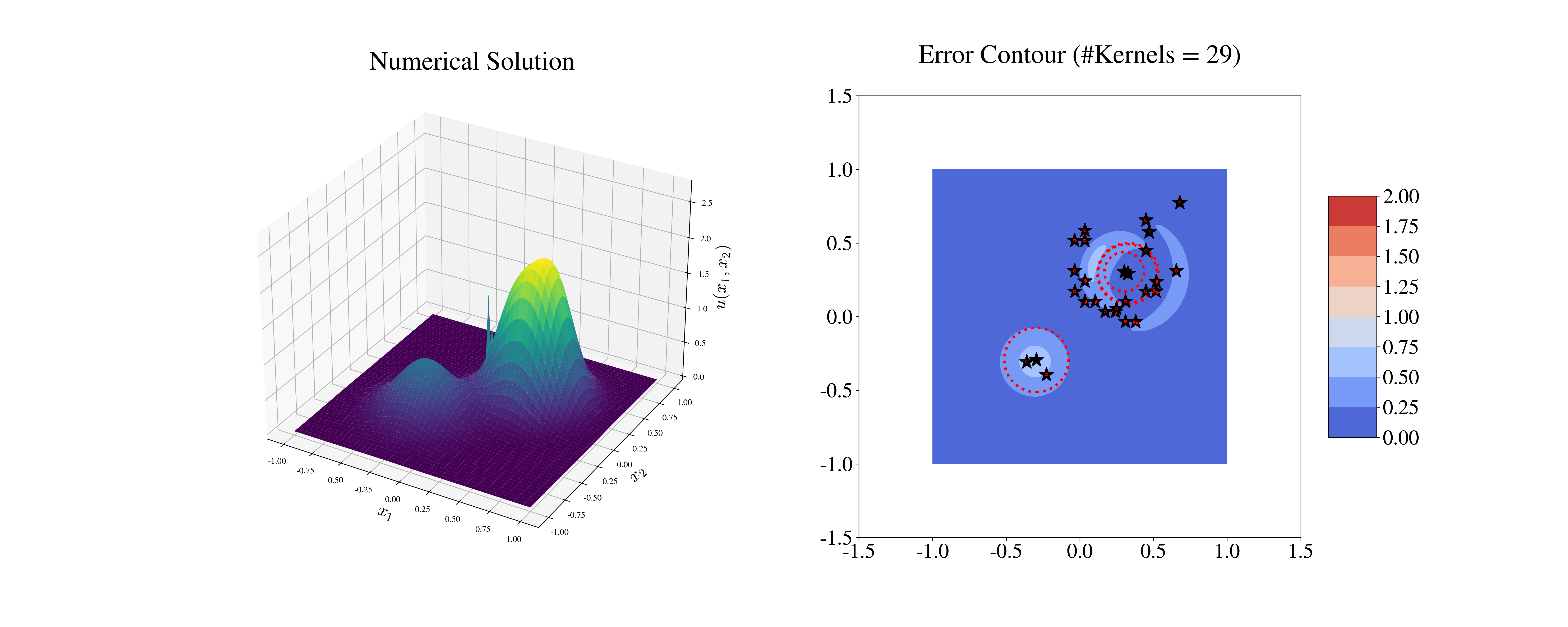}
        \caption{No prefactor $\sigma^{\gamma}$;\\ $\mathrm{err}_{2}^{\text{rel}} = 26.34\%$, $\#\text{params}=116$}
    \end{subfigure}
    \hfill
    \begin{subfigure}[t]{0.49\textwidth}
        \centering
        \includegraphics[width=\textwidth]{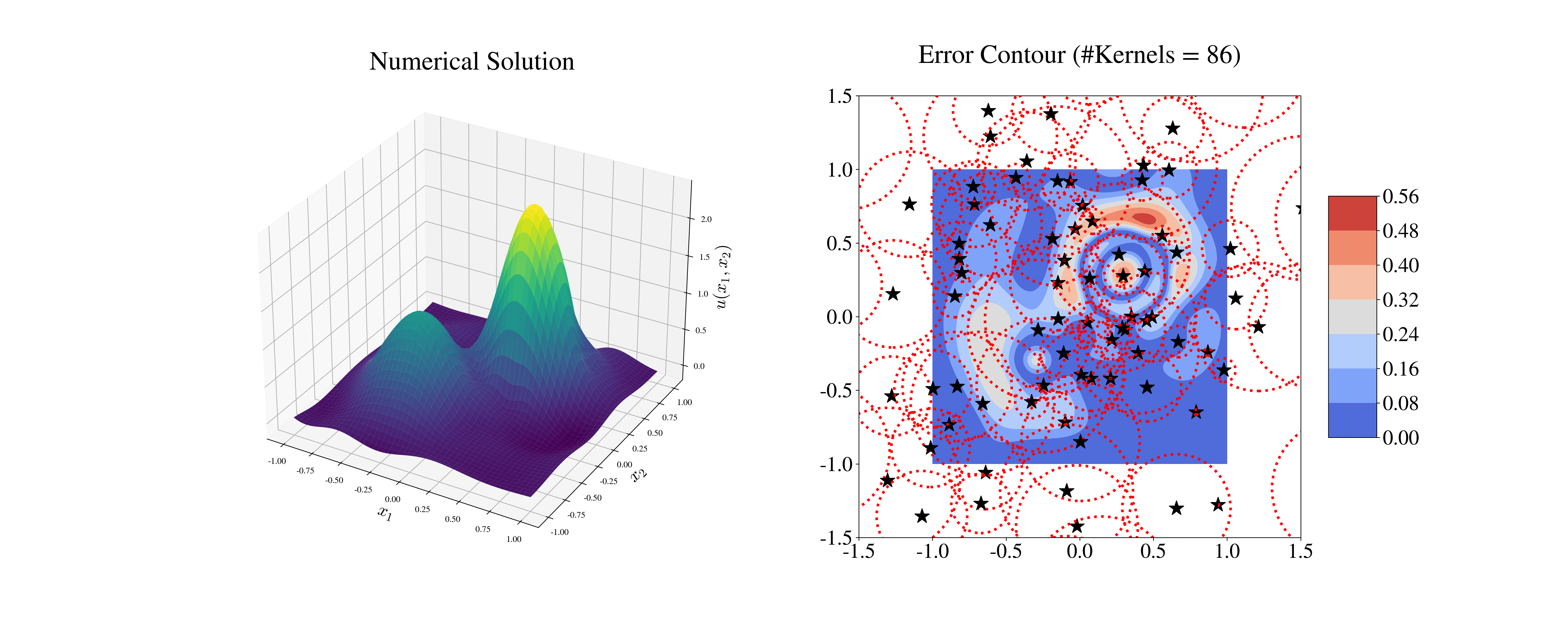}
        \caption{No gradient boosting, random insertion;\\ $\mathrm{err}_{2}^{\text{rel}} = 26.99\%$, $\#\text{params}=344$}
        \label{fig:random_insertion}
    \end{subfigure}

    \caption{Comparison between models trained to solve an elliptic PDE using different algorithmic variants. In each panel, the left plot shows the trained model on the domain, while the right plot shows the residual on the domain, together with the trained weights. Each $\star$ represents the location parameter $y_{n}$ of a learned kernel node, and the dotted circle around has radius $\sigma_{n}$.}
\label{fig:sixpanel}
\end{figure}

We extend the study in \Cref{fig:outeronly} to \Cref{sec:num}, where we examine the trade-off between accuracy and runtime. To further demonstrate the necessity of the boosting algorithm relative to random insertion without boosting, we also examine the performance of fixed-size networks with random insertion in the following subsection.

\subsection{Fixed-width network vs.\ Adaptive-width network}
\label{sec:fixedNNcomparison}
While gradient boosting is indispensable for adaptive networks, it introduces additional computational cost of order $\mathcal{O}(MK)$ per iteration, where $M$ denotes the number of candidate samples and $K$ the number of collocation points. Moreover, adaptive-width networks entail additional implementation and runtime complexity due to the need to handle dynamically changing matrix size (See Appendix~\ref{app:imp}). These considerations motivate a comparison between fixed-width networks and adaptive networks. 
We therefore train fixed-width RBF networks of varying sizes using the same number of iterations as the adaptive-width network (ours). 
The fixed networks are trained with either $\ell^{1}$ or $\ell^{2}$ regularization and are optimized using Gauss-Newton iterations. The nonsmoothness induced by the $\ell^{1}$ regularization is handled by the same semismooth scheme as employed in our adaptive method. With $\ell^{2}$ regularization, the method effectively reduces to a random feature model (see \cite[Section 1.2.2]{shao2025solving}) if the inner weights remain fixed during training.
For each run, the inner weights are sampled uniformly at random from parameter set $\Omega_{c}$, while the outer weights are initialized from a standard Gaussian distribution. Under this initialization, fixed-width networks trained with $c$-only optimization are highly sensitive to initialization and typically converge to poor solutions, analogous to the sensitivity of shape parameters in classical kernel methods, even for very large network widths.
A comparison of accuracy and computational efficiency, in which both methods employ the full solver (i.e.\ optimizing both inner and outer weights), is then summarized in \Cref{fig:fixed_vs_adaptive}.

\begin{figure}[t]
    \centering
    \begin{subfigure}[t]{0.46\textwidth}
        \centering
        \includegraphics[width=\linewidth]{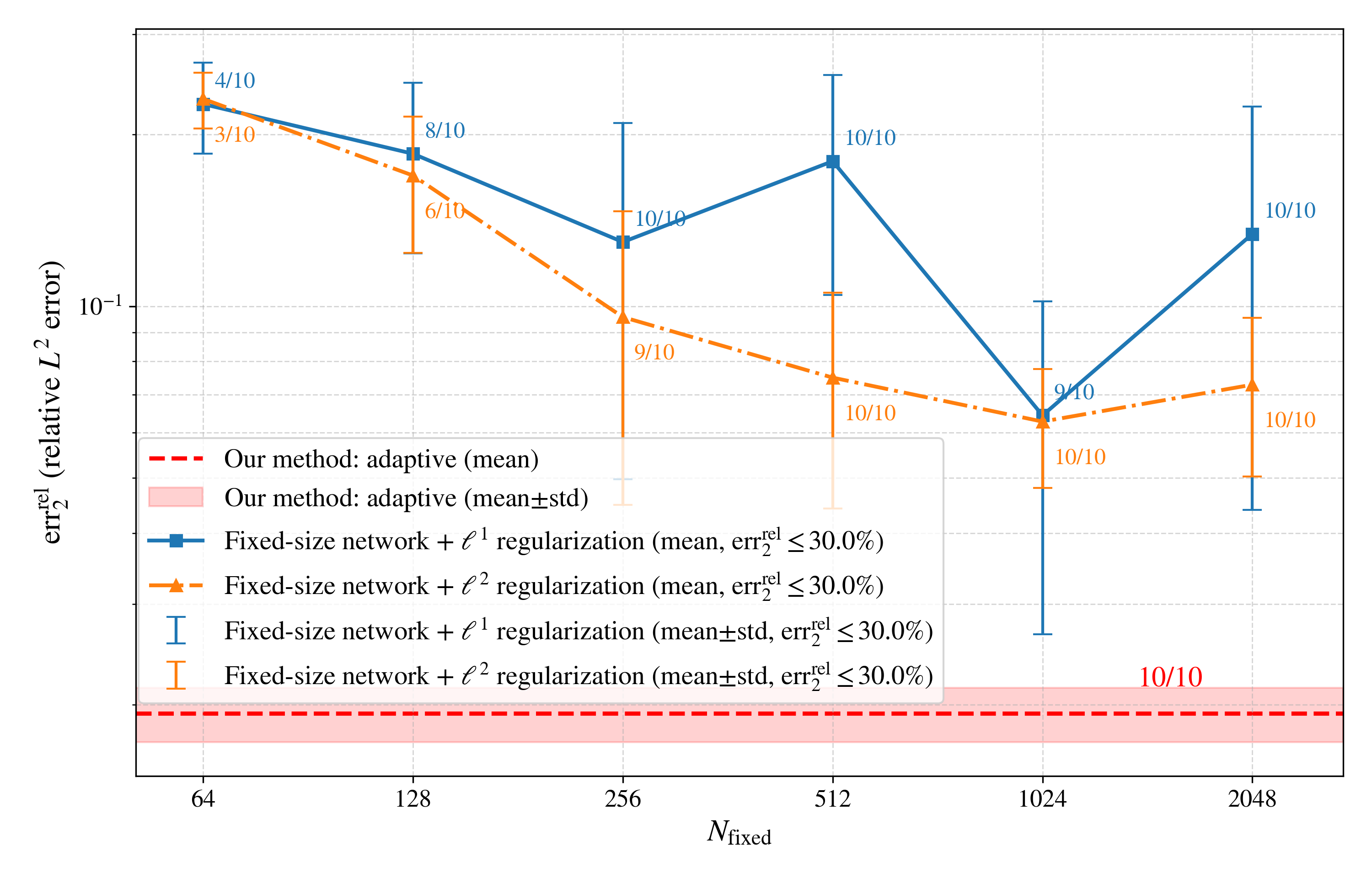}
        \caption{}
        \label{fig:fixed_error}
    \end{subfigure}
    \hfill
    \begin{subfigure}[t]{0.46\textwidth}
        \centering
        \includegraphics[width=\linewidth]{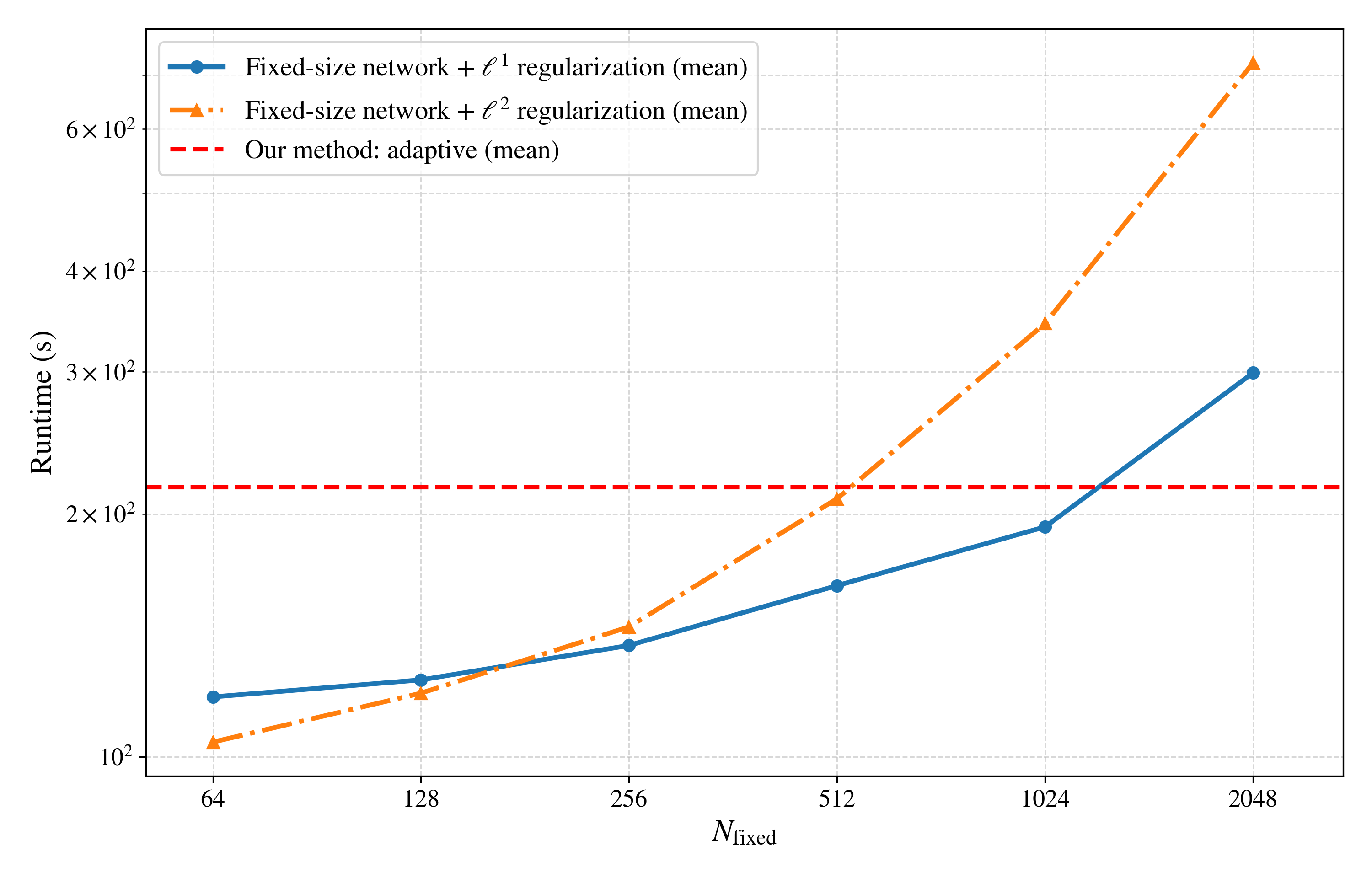}
        \caption{ }
        \label{fig:fixed_runtime}
    \end{subfigure}
    \caption{Performance comparison of fixed-size models with varying budget $N_{\mathrm{fixed}}$ (with $\ell^{1}$ and $\ell^{2}$ regularization,  respectively) 
    against the adaptive method (ours). Results aggregated from 10 runs. The regularization parameter is set to $\alpha = 10^{-3}$ for the $\ell^{1}$-regularized case and to $\alpha = 10^{-5}$ for the $\ell^{2}$-regularized case. (A) Relative test error versus fixed budget $N_{\mathrm{fixed}}$. The dashed horizontal line indicates the adaptive network benchmark (ours). Runs with relative $L^{2}$ error $(\mathrm{err}^{\mathrm{rel}}_{2})$ below $30.0\%$ are classified as successful and are retained for statistical aggregation. The corresponding success rate is indicated near each data point; (B) Runtime versus fixed budget $N_{\mathrm{fixed}}$. The dashed horizontal line indicates the adaptive network benchmark.}
    \label{fig:fixed_vs_adaptive}
\end{figure}

We observe that the adaptive network consistently achieves higher accuracy while maintaining competitive computational efficiency, despite the additional overhead introduced by gradient boosting. Moreover, the greedy insertion and deletion mechanism leads to substantially more stable training behavior. We emphasize that these conclusions are specific to the RBF network setting considered in this work, and should not be expected to generalize directly to other regularization strategies or activation functions (e.g., shallow $\operatorname{ReLU}^{k}$ networks in \cite{siegel2023greedy}).

\subsection{Anisotropic Kernels}
\label{sec:AnisoKernel}

Our isotropic kernel-based ansatz can be naturally generalized to the following anisotropic counterpart:
\begin{equation}
\label{eq:aniso_ker_def}
     \varphi_{\text{aniso}}(x;\omega) = 
    \det(\Sigma)^{\gamma/d}
    \phi\left(- |\Sigma^{-1} (x-y) | \right),
    \qquad \omega = (y,\Sigma) \in \R^{d}\times \R^{d\times d}.
\end{equation}
Here the anisotropic radius matrix $\Sigma = QR$ is further parameterized by a rotation matrix $Q$ and a diagonal matrix $R$ with entries in $(0, \sigma_{\max}]$ for some  $\sigma_{\max} >0$.
If $\Sigma = \text{diag}(\sigma, \dots, \sigma)$, $\varphi_{\text{aniso}}$ reduces to the usual isotropic case with radius $\sigma$, thereby recovering the standard isotropic definition.

For practical implementation, one may parameterize $\Sigma^{-1}$ in a brute-force way, which introduces $d(d+1)/2$ independent parameters per kernel. While increasing  number of trainable parameters associated with each kernel node, anisotropic kernels offer additional flexibility for representing directional and anisotropic features of the solution. This increased expressiveness suggests the possibility of reducing the number of kernel centers needed for a given approximation quality. In practice, however, the trade-off between expressiveness, optimization complexity, and computational efficiency is problem-dependent, and the practical benefits of anisotropy are further discussed through numerical experiments in \Cref{sec:num}.

\subsection{Computational challenges}
Several additional details have to be considered to efficiently implement the proposed methods in practice.

\subsubsection{Control of Condition number}
Classical kernel methods often suffer from ill-conditioning during optimization due to overly dense or clustered kernel centers. In our framework, this issue is largely mitigated by a sparse representation combined with a greedy insertion strategy, which inherently prevents the introduction of kernels whose parameters $\omega$ are close to those of existing ones. Instead, the dominant source of ill-conditioning arises from the disparity in gradient magnitudes between the inner and outer weights, an effect intrinsic to model design. In practice, we address this issue by employing a simple Jacobi preconditioner when solving the linear system in each Gauss–Newton step (Phase~II), resulting in consistently moderate condition numbers (typically below $10^{5}$ across all experiments).

\subsubsection{Inner weights optimization in high dimensions}

Optimizing the inner weights $\omega$ substantially increases the number of trainable variables, an effect that becomes more pronounced as the spatial dimension grows. For isotropic kernels, the total number of inner parameters scales as $O(Nd)$, whereas for anisotropic kernels it scales as $O(Nd^{2})$ ($d$ is the spatial dimension and $N$ is the number of kernels). In high-dimensional regimes, the solution geometry is typically more complex and consequently requires more kernels. Consequently, the number of inner parameters increases rapidly with the problem dimension. While the additional degrees of freedom may improve accuracy, they also render the resulting optimization problem more difficult to solve due to the increased dimensionality and nonconvexity of the parameter space.

Taken together, these considerations indicate that inner-weight optimization, despite enhanced expressivity, should be approached with care in high-dimensional problems. This observation motivates the exploration of the algorithmic variant discussed above, in which only the outer weights $c$ are trained; its performance will be examined in the numerical experiments (see \Cref{sec:num}). More broadly, the development of alternative surrogate models that avoid the brute-force parameterization of kernel centers and (anisotropic) shape parameters is of great interest and is left for future work.

\subsection{Scope of the Method}

The proposed approach is an adaptive kernel-based method. By employing a sparse representation, it is able to scale to moderately high-dimensional settings that are typically challenging for traditional kernel methods \cite{chen2025sparse}. As with other kernel-based approaches, its primary strength lies in the accurate and stable treatment of challenging differential operators, including high-order operators such as the Bi-Laplacian and nonlocal operators such as fractional Laplacians. In these cases, the ability to evaluate differential operators in a quasi-analytical manner provides a clear practical advantage over purely automatic-differentiation-based methods, such as those based on MLP ansatz.

When extending to very high-dimensional problems, however, the computational overhead associated with optimizing inner weights becomes a limiting factor. Overall, the method is best suited for moderately high-dimensional problems involving complex or nonstandard differential operators, where its meshfree formulation and quasi-analytical operator evaluation can be effectively leveraged.


\section{Numerical experiment}
\label{sec:num}

In this section, we conduct numerical experiments to demonstrate the theoretical analysis and numerical heuristics discussed above. We begin with a 4D semilinear bi-Laplacian problem with both smooth and non-smooth manufactured solutions. We then switch the focus to the fractional Poisson equation. Finally, we consider a second-order regularized Eikonal equation, which serves as a testbed for anisotropic kernels and also allows us to study the interplay between the artificial viscosity and resolution of  collocation point. Our primary accuracy metric used in these experiments are the relative $L^{2}$ error
\[
    \text{err}^{\text{rel}}_{2} = \frac{\|\hat{u} - u_{\text{true}}\|}{\|u_{\text{true}}\|},
\]
and we also record the number of kernel centers learned during training. All experiments are done on a single A100 GPU with 40GB memory, and double-precision (float64) arithmetic is used. Training time is reported as an additional indicator of computational efficiency.


\subsection{4D semilinear Bi-Laplacian Problem}

We consider the semilinear bi-Laplacian equation equipped with Navier boundary condition
i.e. 
\begin{equation}
\label{eq:semilinear-biharmonic}
\begin{cases}
(-\Lap)^{2} u(x) + u^{3}(x) = f(x) &  x\in D,\\[0.3em]
u(x) = g(x), \quad \Lap u (x) = h(x) & x\in\partial D,
\end{cases}
\end{equation}
where $D = [-1, 1]^{4}$ denotes the hyper unit cube in $\R^{4}$. Solving such an equation requires computing $(-\Lap)^{2} u$, which usually requires assembling a dummy computational graph, whereas it is analytically obtainable for our RBF networks (see Appendix \ref{app:compDiff} for comparison of computational time and memory).
Manufactured exact solutions with different levels of smoothness are tested, with $f$, $g$, and $h$ calculated accordingly.
In this example, we let $\phi$ be Gaussian and Mat\'ern kernel ($\nu = 5/2$). Although a Matérn kernel with $\nu = 9/2$ is technically required to guarantee $C^{4}$ regularity when solving fourth-order equations in strong form, the Mat\'ern kernel is smooth away from the origin. We further observe empirically that higher-order Mat\'ern kernels tend to require more delicate hyperparameter tuning.

\subsubsection{Smooth exact solution}
We first prescribe the exact solution to be 
\label{sec:bilap_smooth}
\begin{equation}
\label{eq:exact_sum_4d}
    u(x) = \sum_{i=1}^{4}\sin(\pi x_{i}), \qquad x\in D.
\end{equation}
Collocation points are chosen on uniform grids with $K_1 = 1296$ and $K_2 = 2800$ (corresponding to $8$ points per spatial dimension).
Recall that $K_1$ is the number of collocation points in the domain $D$, and $K_2$ is the number of collocation points on the boundary $\partial D$.
We consider two regularization levels, $\alpha = 1e\text{-}4$ and $\alpha = 1e\text{-}6$, and train the network using either a full solver, in which both the outer weights $c$ and inner parameters $\omega$ are optimized, or an outer-only solver mentioned in \Cref{sec:algo} (only outer weights will be trained via semi-smooth Gauss Newton in Phase~II).
For both kernel choices, the training for $\alpha = 1e\text{-}4$ is initialized from an empty network, while the $\alpha = 1e\text{-}6$ case is initialized using the solution obtained at $\alpha = 1e\text{-}4$. The results of all experiments are summarized in \Cref{tab:bilap_alpha_levels}.

We observe that the full solver consistently achieves higher accuracy with a sparser representation across both kernel classes and regularization levels. However, this improvement comes with reduced computational efficiency, owing to the increased number of trainable parameters. To further examine this trade-off, we analyze the convergence behavior for an individual training run. In \Cref{fig:loss-t-iter}, we plot the loss as a function of both iteration count and actual runtime.

When the inner parameters are optimized, the full solver exhibits a steeper decrease in the loss per iteration. Nevertheless, due to the additional cost associated with optimizing the inner weights, the outer-only solver achieves faster loss decay in terms of runtime during the early stages of training. This observation suggests that the outer-only solver can serve as an effective warm-up phase, accelerating convergence for more challenging problems before switching to the full solver for refinement. 

\begin{table}[t]
\centering
\caption{Bi-Laplacian experiment results for a smooth exact solution (mean $\pm$ std over 10 runs, 3000 iterations each).
Relative errors $\mathrm{err}^{\mathrm{rel}}_{2}$ are evaluated on a $20^{4}$ uniform grid.}
\label{tab:bilap_alpha_levels}
\renewcommand{\arraystretch}{1.15}
\begin{tabular}{|c|c|c|c|c|c|}
\hline
\textbf{Regularization} &
\textbf{Kernel} &
\textbf{Solver} &
$\mathrm{err}^{\mathrm{rel}}_{2}$ (\%) &
\textbf{\#Kernels} &
\textbf{Runtime (s)} \\
\hline

\multirow{4}{*}{$\alpha=1e\text{-}4$}
& \multirow{2}{*}{Gaussian}
& Full solver   & $1.36 \pm 0.03$ & $674 \pm 34$  & $370 \pm 2$  \\ \cline{3-6}
& 
& Outer only    & $2.63 \pm 0.14$ & $1188 \pm 19$ & $221 \pm 1$  \\ \cline{2-6}

& \multirow{2}{*}{Mat\'ern}
& Full solver   & $2.13 \pm 0.17$ & $766 \pm 63$ & $399 \pm 5$ \\ \cline{3-6}
&
& Outer only    & $4.59 \pm 0.27$ & $1874 \pm 18$ & $234 \pm 1$  \\
\hline

\multirow{4}{*}{$\alpha=1e\text{-}6$}
& \multirow{2}{*}{Gaussian}
& Full solver   & $0.48 \pm 0.06$ & $786 \pm 57$ & $524 \pm 29$ \\ \cline{3-6}
&
& Outer only    & $0.55 \pm 0.03$ & $2232 \pm 21$ & $213 \pm 1$  \\ \cline{2-6}

& \multirow{2}{*}{Mat\'ern}
& Full solver   & $0.83 \pm 0.06$ & $777 \pm 47$ & $439 \pm 124$ \\ \cline{3-6}
&
& Outer only    & $1.13 \pm 0.08$ & $2874 \pm 29$ & $266 \pm 1$  \\
\hline
\end{tabular}
\end{table}

\begin{figure}[t]
    \centering
    \begin{subfigure}[t]{0.48\textwidth}
        \centering
        \includegraphics[width=\linewidth]{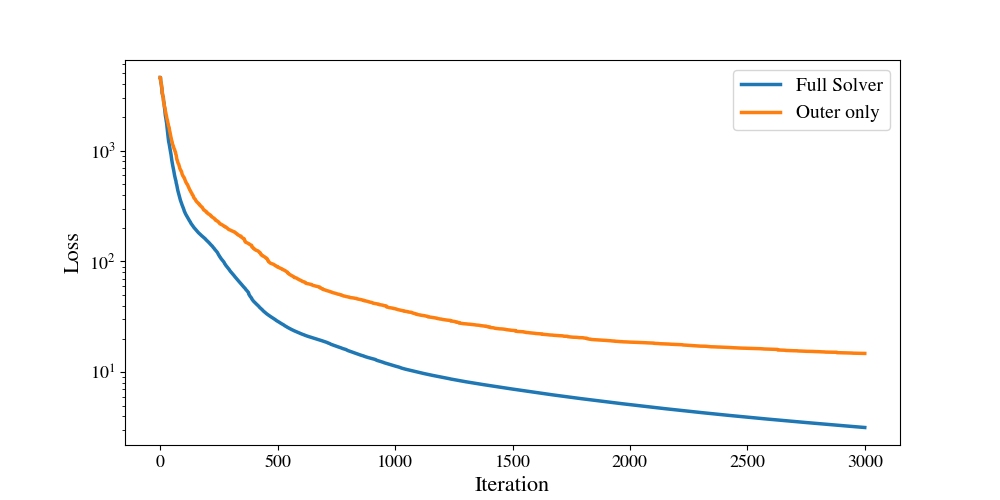}
        \caption{Loss - Iteration}
        \label{fig:loss-iter}
    \end{subfigure}
    \hfill
    \begin{subfigure}[t]{0.48\textwidth}
        \centering
        \includegraphics[width=\linewidth]{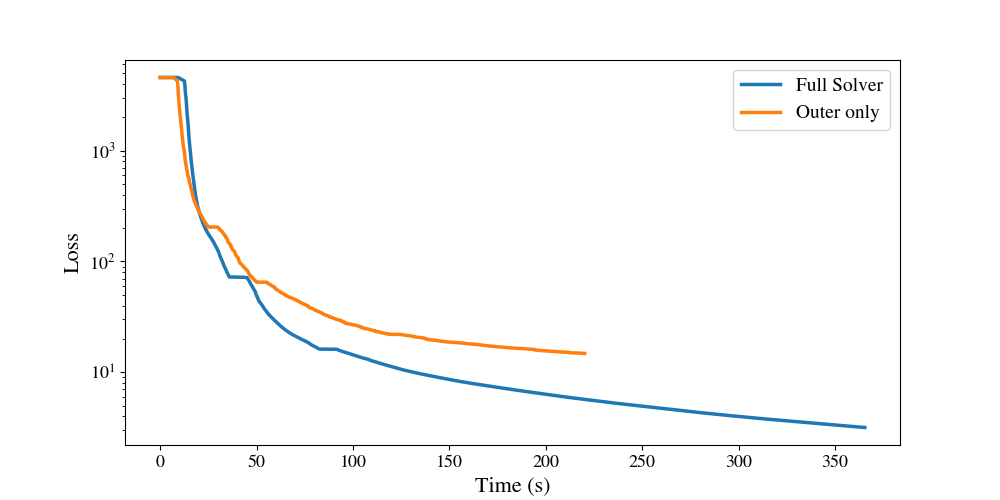}
        \caption{Loss - Time (s)}
        \label{fig:loss-t}
    \end{subfigure}
    \caption{Convergence of loss of full solver and outer only solver ($\alpha=1e\text{-}4$ with  Gaussian kernel).}
    \label{fig:loss-t-iter}
\end{figure}

\subsubsection{Low regularity exact solution}

In classical kernel methods, the approximation space is intrinsically tied to the native space of the chosen kernel. For instance, the Matérn kernel with smoothness parameter $\nu = 5/2$ is associated with the Sobolev space $H^{4.5}$, while the Gaussian kernel corresponds to a much smaller native space due to its infinite smoothness and strong decay. Consequently, traditional kernel methods are often limited in their ability to approximate solutions whose regularity falls outside these native spaces.

In contrast, as established in our analysis in \Cref{sec:functionspace}, the Banach space induced by superpositions of kernels with variable radii can be characterized as a Besov space, provided mild regularity conditions are satisfied. Notably, this resulting function space is independent of the specific kernel choice. This observation suggests that our method may accurately approximate solutions with low regularity, even with highly smooth kernels such as the Gaussian.

To further investigate this, we prescribe exact solution to be
\begin{equation}
 u(x) = \left(\frac{|x - c|}{2}\right)^{q}, \qquad x\in D;
\end{equation}
where $c = [0.5, 0.5, 0.5, 0.5]$ and the exponent $q$ explicitly controls the regularity of the solution. In particular, we consider $q \in \{4.3, \ 2.6,\ 0.3\}$, for which the corresponding solutions belong to $C^{4,0.3}$, $H^{4.5}$ and $H^{1}$, respectively. Collocation points are chosen on uniform grids with $K_1 = 10000$ and $K_2 = 2800$.

\begin{table}[t]
\centering
\caption{Bi-Laplacian experiment results for non-smooth manufactured solutions 
(mean $\pm$ std over 10 runs, 2000 iterations each).
Relative errors $\mathrm{err}^{\mathrm{rel}}_{2}$ are evaluated on a $20^{4}$ uniform grid.}
\label{tab:bilap_nonsmooth}
\renewcommand{\arraystretch}{1.15}
\begin{tabular}{|c|c|c|c|c|}
\hline
\textbf{$q$} &
\textbf{Kernel} &
$\mathrm{err}^{\mathrm{rel}}_{2}$ (\%) &
\textbf{\#Kernels} &
\textbf{Runtime (s)} \\
\hline

\multirow{2}{*}{$q = 4.3$}
& Gaussian
& $7.91 \pm 1.02$
& $1112 \pm 23$
& $459 \pm 22$ \\ \cline{2-5}

& Mat\'ern
& $5.00 \pm 0.30$
& $1632 \pm 21$
& $620 \pm 7$ \\
\hline

\multirow{2}{*}{$q = 2.6$}
& Gaussian
& $5.17 \pm 1.19$
& $1076 \pm 45$
& $422 \pm 56$ \\ \cline{2-5}

& Mat\'ern
& $3.14 \pm 0.28$
& $1629 \pm 22$
& $627 \pm 6$ \\
\hline

\multirow{2}{*}{$q = 0.3$}
& Gaussian
& $1.69 \pm 0.31$
& $1228 \pm 110$
& $468 \pm 60$ \\ \cline{2-5}

& Mat\'ern
& $1.79 \pm 0.08$
& $1558 \pm 27$
& $616 \pm 12$ \\
\hline
\end{tabular}
\end{table}

The results in \Cref{tab:bilap_nonsmooth} demonstrate that the proposed method is capable of solving Bi-Laplacian problems with low-regularity solutions. In particular, even for the case $q = 0.3$, where the manufactured solution exhibits limited smoothness, both Gaussian and Mat\'ern kernels achieve comparable accuracy, indicating that the method is not restricted by the native space regularity of the underlying kernel. For the smoother case and $q = 2.6$ and $q = 4.3$, for which the manufactured solutions belong to the native space of the Mat\'ern kernel, the Mat\'ern kernel attains lower relative error than the Gaussian kernel.
Nevertheless, this accuracy improvement is accompanied by a substantially larger number of kernel centers and, consequently, increased computational cost. When accounting for both representation complexity and runtime, the Mat\'ern kernel does not exhibit a clear overall advantage over the Gaussian kernel, even when the manufactured solution is in its native space. We also observe that the performance of both kernels degrades as $q$ increases, which is presumably due to the increased variation of the solution values along the boundary.

\subsubsection{Comparison study of solving PDEs of different orders}
We conclude this example with a comparison study for solving $0^{\text{th}}$-, $2^{\text{nd}}$-, and $4^{\text{th}}$-order PDEs. This is motivated by the known difficulty of solving PDEs of higher order for both classical methods and machine learning-based methods. For classical PDEs, the condition number of the resulting linear systems worsens as equation order becomes higher; in neural network literature, high-order differential operators are observed to corrupt back-propagated gradients and destablize convergence \cite{basir2022investigating,song2024does}. We solve the following equation 
\begin{equation}
\begin{cases}
(-\Lap)^{\beta/2} u(x)  = f(x) &  x\in D,\\[0.3em]
(-\Lap)^{l} u(x) = g_{l}(x) & x\in\partial D,\quad \forall \ l < \beta / 2 ,\ l\in\{0, 1\}
\end{cases}
\end{equation}
with $\beta \in \{0, 2, 4\}$. Exact solutions is prescribed as \Cref{eq:exact_sum_4d} and boundary conditions $g_{l}$ computed accordingly. Other computational settings are the same as in \Cref{sec:bilap_smooth} with $\alpha=1e\text{-}4$. The results are reflected in \Cref{tab:linear_poly_hamonic}. We observe that, although the relative errors are comparable across equations of different orders, our method inserts significantly more kernels when $\beta = 4$. Since the exact solution is identical in all cases, this suggests that the increased number of kernels is attributed to the intrinsic difficulty introduced by the biharmonic equation. In addition, we observe that the $0^{\text{th}}$-order (regression) problem is not necessarily easier than the second-order case and, in practice, requires explicit boundary enforcement to achieve stable and accurate solutions\footnote{We also performed the same experiment in two dimensions using a substantially larger number of collocation points to mitigate the effects of collocation sparsity in higher dimensions; the qualitative observations remain unchanged.}.  Although a systematic investigation is beyond the scope of this paper, we note that our results are consistent with recent findings on learning PDE solutions using shallow neural networks. In particular, the study in \cite{he2025can} shows that the order of the differential operator can mitigate the ill-conditioning introduced by shallow $\operatorname{ReLU}^{k}$ neural networks. This is in contrast to solving PDEs with classical FD/FE methods. This observation may help explain why learning solutions of second-order PDEs appears easier in our experiments.  For the biharmonic equation, the increased difficulty is likely attributable to the more difficult (hinged) boundary conditions.  We emphasize that this interpretation is heuristic. A more rigorous understanding of the interplay between PDE order, boundary conditions, and the conditioning behavior of RBF networks is left for future work. 

\begin{table}[t]
\centering
\caption{4D linear PDEs results under different differential operator order (mean $\pm$ std over 10 individual runs, 2000 iterations each). Relative errors $\mathrm{err}^{\mathrm{rel}}_{2}$ are evaluated on a $20^{2}$ uniform grid.}
\label{tab:linear_poly_hamonic}
\renewcommand{\arraystretch}{1.15}
\begin{tabular}{|c|c|c|c|c|}
\hline
\textbf{Kernel} &
\textbf{$\beta$} &
$\mathrm{err}^{\mathrm{rel}}_{2}$ (\%) &
\textbf{\#Kernels} &
\textbf{Runtime (s)} \\
\hline

\multirow{3}{*}{Gaussian}
& $\beta = 0$ & $1.20 \pm 0.06$ & $511.4 \pm 15.2$ & $200.8 \pm 8.2$ \\ \cline{2-5}
& $\beta = 2$ & $1.00 \pm 0.02$ & $378.4 \pm 9.9$  & $129.0 \pm 9.7$ \\ \cline{2-5}
& $\beta = 4$ & $1.51 \pm 0.05$ & $919.7 \pm 13.1$ & $262.8 \pm 0.7$ \\
\hline

\multirow{3}{*}{Mat\'ern}
& $\beta = 0$ & $1.35 \pm 0.09$ & $554.6 \pm 9.7$  & $223.0 \pm 5.5$ \\ \cline{2-5}
& $\beta = 2$ & $1.12 \pm 0.01$ & $520.3 \pm 39.8$ & $218.3 \pm 11.7$ \\ \cline{2-5}
& $\beta = 4$ & $2.18 \pm 0.22$ & $1207.1 \pm 20.5$ & $441.3 \pm 18.9$ \\
\hline

\end{tabular}
\end{table}

\subsection{Fractional Poisson equation}
In this example, we present a proof-of-concept study demonstrating how our method can be applied to fractional Laplacian problems. We consider the 2-dimensional homogeneous fractional Poisson equation with homogeneous Dirichlet exterior conditions
\begin{equation}
\begin{cases}
    (-\Lap)^{\beta/2} u(x)   = f(x) &\qquad x\in D \\[0.4em]
     u(x) \equiv 0 &\qquad x\in D^{c},
\end{cases}
\end{equation}
where $D = \{|x| <1\}$ denotes the unit disk. 
We choose the forcing term to be constant, $f(x) \equiv 1$, for which the exact solution is known and given by
\[
u(x) = 2^{-\beta}\bigl(\Gamma(1+\beta/2)\bigr)^{-2}\,(1-|x|^{2})_{+}^{\beta/2}.
\]
The training dataset is then constructed by sampling points uniformly in the radial direction on the unit disk, namely
\[
     \left\{\frac{2l}{K}(\cos(2k\pi/K), \sin(2k\pi/ K) \right\}_{0\leq  l, k\leq K},
\]
and subsequently classify the points as interior or exterior depending on whether $2l/K < 1$. Owing to the nonlocal nature of the fractional Laplacian, a relatively large number of collocation points are sampled outside the domain (see Appendix \ref{app:frac} for further details). We emphasize that this strategy is not optimal, and that more efficient approaches are available (see, e.g., \cite{burkardt2021unified, zhuang2022radial}). While such refinements are important in practice, they have been well-investigated and beyond the scope of the present manuscript.

We use Gaussian kernels and we fix $K=200$. We test $\beta = 1.5, 1.0, 0.5$. The shrinking fractional order makes the solution less regualar near the boundary ($u\in C^{0, {\beta}/{2}}$). To adapt to this, we see more kernels (with small radius) are positioned near the boundary (see \Cref{fig:frac_contour}). Also, when $\beta$ becomes smaller, the problem is essentially harder, see \Cref{tab:frac_2d_beta}. 

\begin{figure*}[t]
\centering
\begin{subfigure}{0.90\textwidth}
    \centering
    \includegraphics[width=\linewidth]{figs/frac/frac_2d_3_exact_3d.png}
    \caption{Exact solutions.}
    \label{fig:frac_exact}
\end{subfigure}

\begin{subfigure}{0.90\textwidth}
    \centering
    \includegraphics[width=\linewidth]{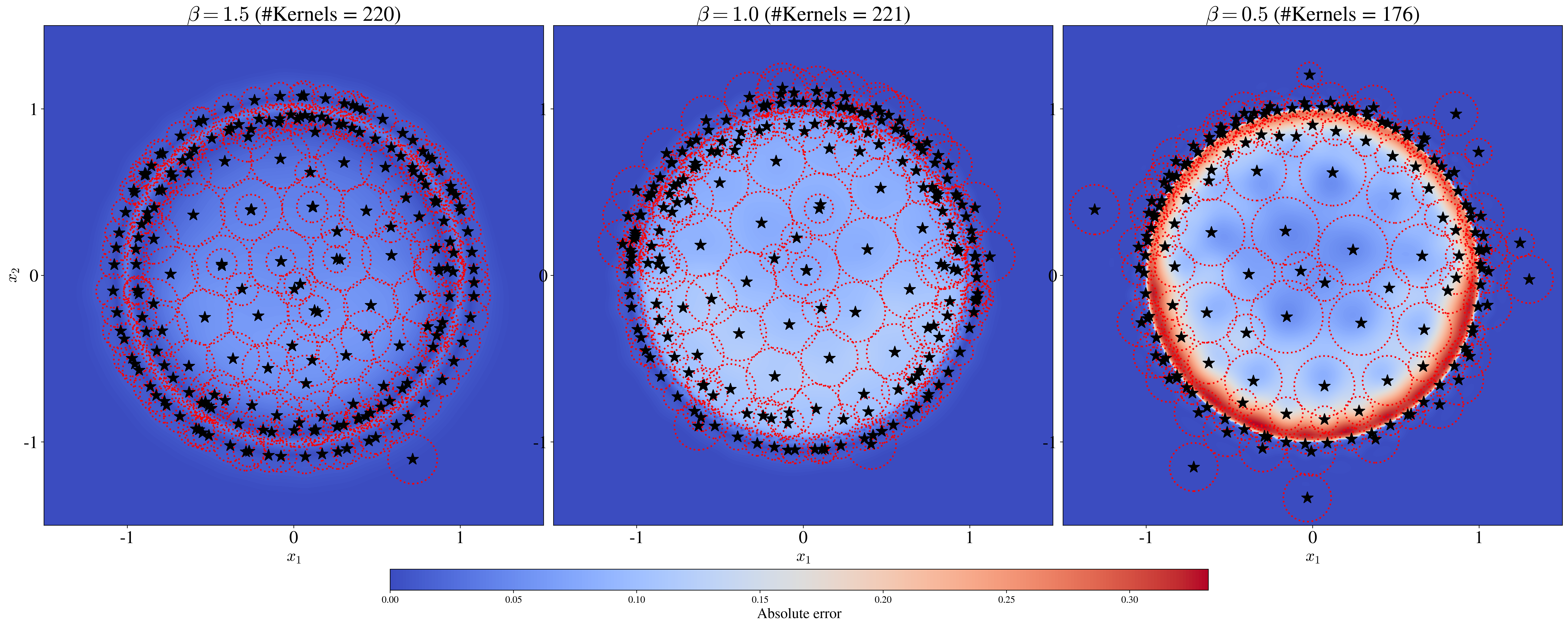}
    \caption{Error contours.}
    \label{fig:frac_contour}
\end{subfigure}
\caption{Exact solutions and error contours for the 2D fractional Poisson equation on the unit disk with different fractional orders ($\beta$). Each $\star$ in the error contour represents the
location parameter $y_{n}$ of a learned kernel node, and the dotted circle around has radius $\sigma_{n}$.}
\label{fig:frac}
\end{figure*}

\begin{table}[t]
\centering
\caption{2D fractional PDE results under different fractional order (mean $\pm$ std over 10 individual runs, 1000 iterations each).
Relative errors $\mathrm{err}^{\mathrm{rel}}_{2}$ are evaluated on a $300^{2}$ uniform test grid in $[-1, 1]^{2}$, then cropped to unit disk.}
\label{tab:frac_2d_beta}
\renewcommand{\arraystretch}{1.15}
\begin{tabular}{|c|c|c|c|}
\hline
\textbf{$\beta$ } &
$\mathrm{err}^{\mathrm{rel}}_{2}$ (\%) &
\textbf{\#Kernels} &
\textbf{Runtime (s)} \\
\hline
$\beta = 0.5$ & $11.20 \pm 0.25$ & $329 \pm 13$ & $277 \pm 2$ \\
\hline
$\beta=1.0$ & $8.70 \pm 0.32$ & $362 \pm 16$ & $280 \pm 2$ \\ 
\hline
$\beta=1.5$   & $4.05 \pm 0.04$ & $370 \pm 9$  & $268 \pm 8$ \\
\hline
\end{tabular}
\end{table}

This example highlights the capability of our method to handle nonlocal fractional Laplacian operators. Compared with the traditional kernel method, the proposed approach offers enhanced expressivity through the use of variable kernel radii, while retaining the ability to evaluate the fractional differential operator in a quasi-analytical manner. A more in-depth investigation of the resulting accuracy, efficiency, and scalability advantages will be the subject of future work.

\subsection{Eikonal Equation}

\subsubsection{2D anisotropic equation: Testbed for anisotropic kernels}
In this example, we consider the anisotropic viscous Eikonal equation
\begin{equation}
\begin{cases}
    -\epsilon \Lap u(x) + \nabla u(x)^{T} M\nabla u(x) = f(x)^{2}
    \qquad &x \in D \\
    u(x)=0 \qquad &x\in \partial D,
\end{cases}
\end{equation}
where is $M\in \R^{d\times d}$ is a symmetric positive definite matrix ($M =LL^{T} $ where $L = \begin{bmatrix}
    2.6 & 0.0 \\
    -4.0& 1.8
\end{bmatrix}$ chosen arbitrarily).
The anisotropicity induced by $M$ makes it a good testbed for anisotropic kernels. For better visualizations, we let $D = [-1, 1]^{2}$ and $f \equiv 1$. The anisotropic kernels are implemented as \eqref{eq:aniso_ker_def} in \Cref{sec:AnisoKernel}. Both isotropic kernels and anisotropic kernels are prescribed with the same training setting, and the homogeneous Dirichlet boundary condition is enforced using the ``mask function" technique as in \cite[Section~4.2]{shao2025solving}. The outcome is visualized in \Cref{fig:iso_vs_aniso} and tabulated in \Cref{tab:iso_aniso}.

\begin{figure*}[t]
    \centering
    \begin{subfigure}[t]{0.40\textwidth}
        \centering
        \includegraphics[width=\linewidth]{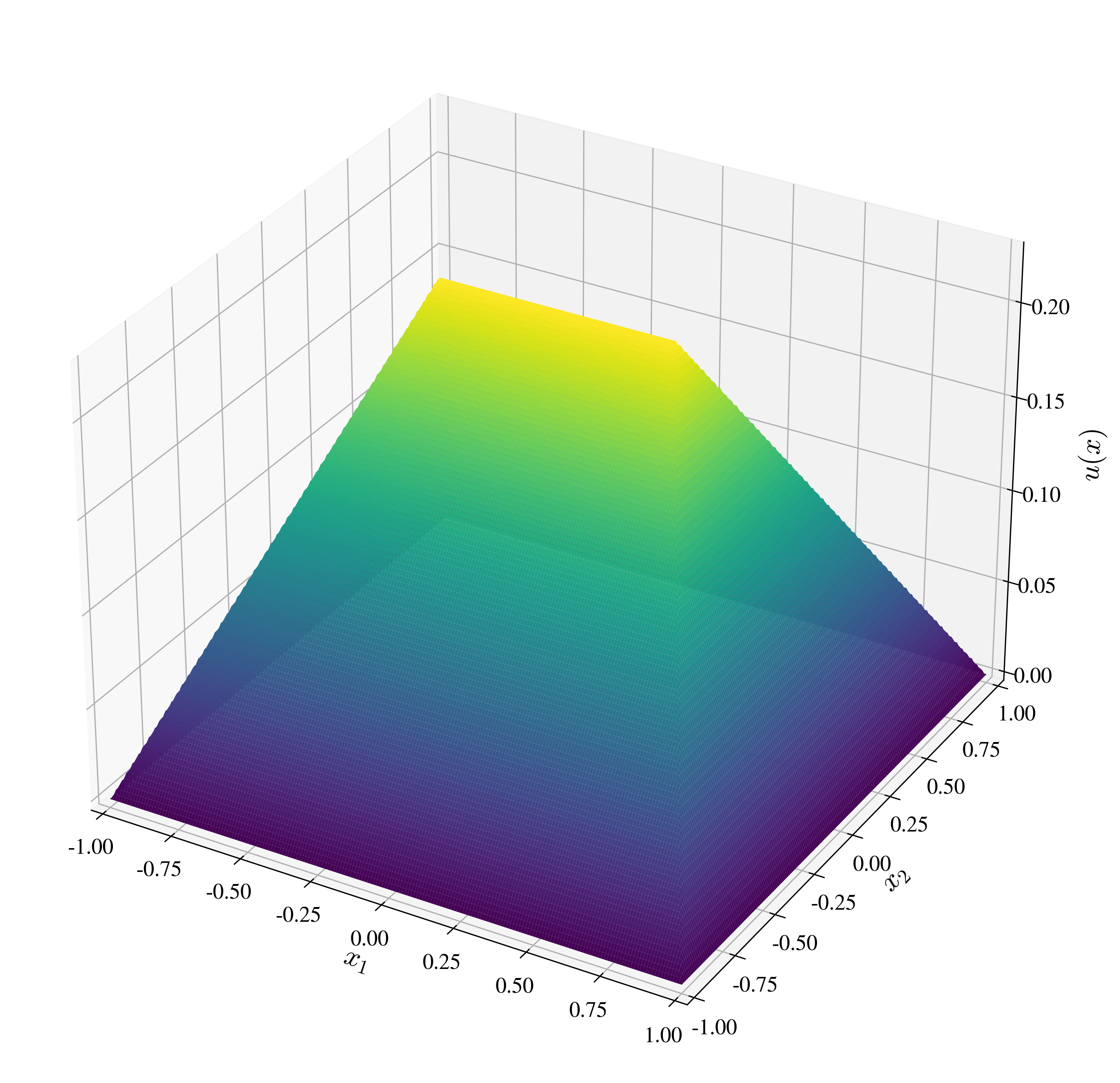}
        \caption{Viscosity solution}
        \label{fig:eikonal-true}
    \end{subfigure}
    \hfill
    \begin{subfigure}[t]{0.57\textwidth}
        \centering
        \includegraphics[width=\linewidth]{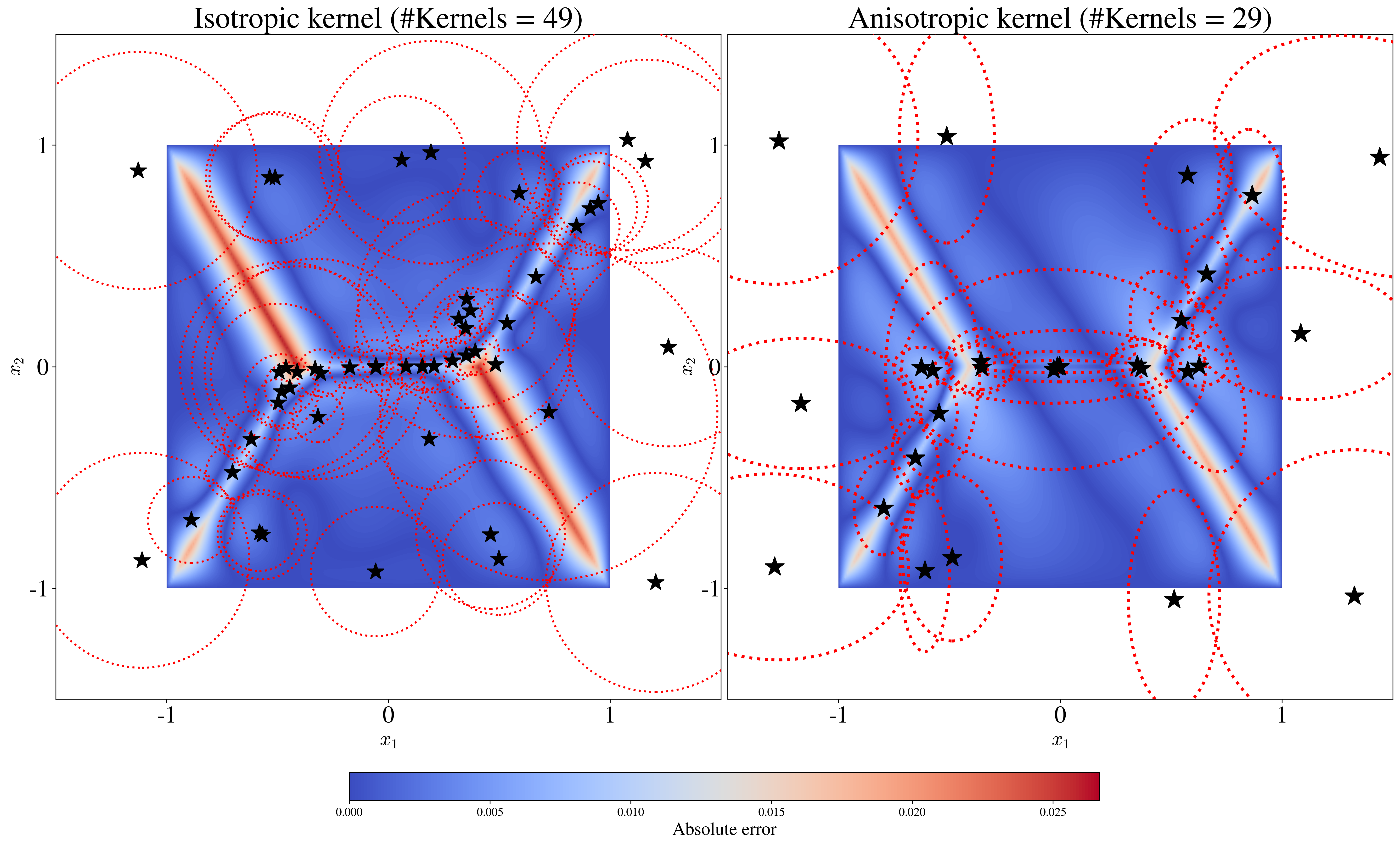}
        \caption{Error contour with isotropic \emph{(left)} and anisotropic kernels \emph{(right)} for $\alpha =1e\text{-}4$.}
        \label{fig:eikonal-comp}
    \end{subfigure}
    \caption{Comparison of isotropic and anisotropic kernels on the anisotropic viscous Eikonal equation. Each $\star$ in the error contour represents the location parameter $y_{n}$ of a learned kernel node. In the isotropic case, the dotted circle around each $\star$ has radius $\sigma_{n}$; in the anisotropic case, the dotted ellipsoid represents the level set $|\Sigma_{n}^{-1}(\cdot - y_{n})| = 1$.}
    \label{fig:iso_vs_aniso}
\end{figure*}

\begin{table}[t]
\centering
\caption{2D Eikonal experiment results with isotropic and anisotropic kernels
(mean $\pm$ std over 10 individual runs).
Relative errors $\mathrm{err}^{\mathrm{rel}}_{2}$ are evaluated on a uniform $200^2$ test grid.}
\label{tab:iso_aniso}
\renewcommand{\arraystretch}{1.15}
\begin{tabular}{|c|c|c|c|c|c|}
\hline
\textbf{Regularization} &
\textbf{Kernel} &
$\mathrm{err}^{\mathrm{rel}}_{2}$ (\%) &
\textbf{\#Kernels} &
\textbf{\#Params} &
\textbf{Runtime (s)} \\
\hline

\multirow{2}{*}{$\alpha = 1e{-}4$}
& Anisotropic
& $4.08 \pm 0.02$
& $31.0 \pm 1.60$
& $186 \pm 10$
& $122 \pm 20$ \\ \cline{2-6}

&
Isotropic
& $5.48 \pm 0.06$
& $56.3 \pm 5.55$
& $225 \pm 22$
& $131 \pm 19$ \\
\hline

\multirow{2}{*}{$\alpha = 1e{-}6$}
& Anisotropic
& $1.74 \pm 0.11$
& $86.1 \pm 5.93$
& $516 \pm 36$
& $196 \pm 2$ \\ \cline{2-6}

&
Isotropic
& $2.55 \pm 0.33$
& $89.4 \pm 9.66$
& $358 \pm 39$
& $145 \pm 3$ \\
\hline
\end{tabular}
\end{table}

We observe that, for a relatively large regularization parameter ($\alpha = 1e\text{-}4$), anisotropic kernels are able to capture directional features of the solution using substantially fewer kernels, leading to improved accuracy compared to their isotropic counterparts. However, this advantage diminishes as the regularization parameter decreases, and no clear performance gain is observed for smaller values of $\alpha$. Moreover, in both regularization regimes, although anisotropic kernels may require fewer kernel centers, they ultimately involve a comparable or even larger number of trainable parameters due to the additional matrix degrees of freedom. Consequently, the overall computational cost remains similar, or slightly higher, than that of isotropic kernels.

These results highlight the potential representational advantages of anisotropic kernels, while also underscoring the limitations of the current implementation in terms of computational efficiency. Bridging this gap will require future developments, such as low-rank parameterizations or alternative surrogate models, that can effectively reduce the cost associated with constructing and optimizing the full anisotropic matrices ${\Sigma_n^{-1}}$.

\subsubsection{1D isotropic equation: Testbed for analogy of ``resolution" in meshfree setting}

In the previous example as well as the experiments reported in our earlier work \cite{shao2025solving}, we observe that the error between the computed solution and the viscosity solution decreases with $\epsilon$ only up to a certain point. Once $\epsilon$ reaches a sufficiently small value, further reduction of $\epsilon$ no longer leads to improvement in the error, and in some cases the error even increases. This phenomenon is reminiscent of a well-known resolution constraint in classical numerical analysis for the (viscous) Eikonal equation, in which convergence to the viscosity solution is observed only when the spatial mesh size scales as $O(\epsilon)$. In our meshfree setting, a natural hypothesis is that the role of the mesh size in finite difference or finite element methods is played by the distribution of collocation points (training samples). To further investigate and validate this analogy, we conduct a controlled numerical experiment on a 1-dimensional isotropic Eikonal equation. This simple setting allows us to exhaustively refine the collocation points. The results are shown in \Cref{fig:eikonal-1d-search} and \Cref{tab:eikonal_1d_linf}. For this specific example, we use $L^{\infty}$ as the primary metric as the convergence results are usually developed in uniform norm.
\begin{figure}[t]
    \centering
    \includegraphics[width=0.8\linewidth]{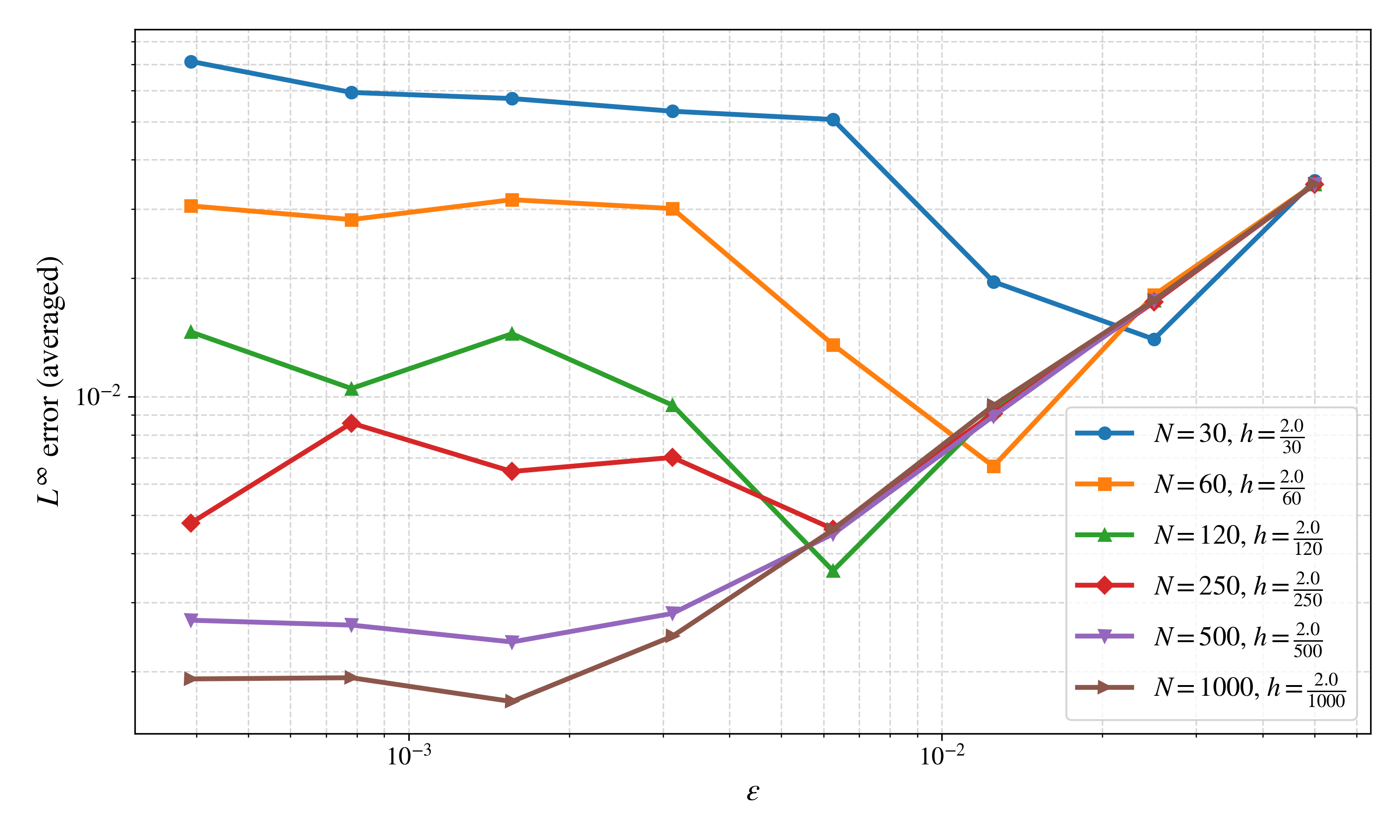}
    \caption{$L^{\infty}$ error as $\epsilon \rightarrow 0$ for different $N$ ($h$).}
    \label{fig:eikonal-1d-search}
\end{figure}

\begin{table*}[t]
\centering
\setlength{\tabcolsep}{4pt}  
\small
\caption{1D isotropic Eikonal equation.
Relative $L^\infty$ error with respect to the viscosity solution
(averaged over 10 runs, outliers excluded). 
For each $(N,h)$, the shaded entry indicates the smallest $\epsilon$ in the initial regime
where the error decays approximately linearly with $\epsilon$.}
\label{tab:eikonal_1d_linf}
\renewcommand{\arraystretch}{1.15}
\begin{tabular}{|l|c|c|c|c|c|c|c|}
\hline
$(N,h)$
& $\epsilon=\tfrac{1}{20}$
& $\epsilon=\tfrac{1}{40}$
& $\epsilon=\tfrac{1}{80}$
& $\epsilon=\tfrac{1}{160}$
& $\epsilon=\tfrac{1}{320}$
& $\epsilon=\tfrac{1}{640}$
& $\epsilon=\tfrac{1}{1280}$ \\
\hline

$(30,\tfrac{2}{30})$
& $3.5\times 10^{-2}$
& \cellcolor{gray!20}$1.4\times 10^{-2}$
& $2.0\times 10^{-2}$
& $5.1\times 10^{-2}$
& $5.3\times 10^{-2}$
& $5.7\times 10^{-2}$
& $5.9\times 10^{-2}$ \\
\hline
$(60,\tfrac{2}{60})$
& $3.5\times 10^{-2}$
& $1.8\times 10^{-2}$
& \cellcolor{gray!20}$6.7\times 10^{-3}$
& $1.4\times 10^{-2}$
& $3.0\times 10^{-2}$
& $3.2\times 10^{-2}$
& $2.8\times 10^{-2}$ \\
\hline
$(120,\tfrac{2}{120})$
& $3.5\times 10^{-2}$
& $1.8\times 10^{-2}$
& $9.3\times 10^{-3}$
& \cellcolor{gray!20}$3.6\times 10^{-3}$
& $9.5\times 10^{-3}$
& $1.4\times 10^{-2}$
& $1.0\times 10^{-2}$ \\
\hline
$(250,\tfrac{2}{250})$
& $3.5\times 10^{-2}$
& $1.7\times 10^{-2}$
& $9.1\times 10^{-3}$
& \cellcolor{gray!20}$4.6\times 10^{-3}$
& $7.0\times 10^{-3}$
& $6.5\times 10^{-3}$
& $8.6\times 10^{-3}$ \\
\hline
$(500,\tfrac{2}{500})$
& $3.5\times 10^{-2}$
& $1.8\times 10^{-2}$
& $8.9\times 10^{-3}$
& $4.5\times 10^{-3}$
& \cellcolor{gray!20}$2.8\times 10^{-3}$
& $2.4\times 10^{-3}$
& $2.6\times 10^{-3}$ \\
\hline
$(1000,\tfrac{2}{1000})$
& $3.5\times 10^{-2}$
& $1.8\times 10^{-2}$
& $9.5\times 10^{-3}$
& $4.6\times 10^{-3}$
& $2.5\times 10^{-3}$
&\cellcolor{gray!20}$1.7\times 10^{-3}$
& $1.9\times 10^{-3}$ \\

\hline
\end{tabular}
\end{table*}

We see from \Cref{fig:eikonal-1d-search}  that the logarithm of error decays linearly with $\log(\epsilon)$ until it reaches some $\epsilon$. We further mark the minimal $\epsilon$ before which the error decays approximately linearly. The pattern roughly agrees with the classical results. To further draw a quantitative conclusion, a refined line search is needed. Nevertheless, it is clear that the number, or more generally the distribution, of collocation points plays a significant role, analogous to the mesh size in FD/FE methods. To take a step further, this example indicates the importance of the distribution of collocation points. A related study can be found in \cite{esteve2025finite}.

\section{Concluding Remarks}
\label{sec:conclusion}

SparseRBFnet can be viewed as a hybrid formulation that combines ideas from kernel methods, physics-informed learning, and greedy algorithms. In this framework, the solution is represented as a sparse superposition of radial basis functions with trainable centers and shape parameters, and is obtained adaptively through a greedy insertion and pruning strategy. As a result, SparseRBFnet naturally inherits favorable properties from each of these perspectives. Building upon this, this work advanced the SparseRBFnet framework through a systematic theoretical and computational study. 
From a theoretical standpoint, we showed that the solution space induced by SparseRBFnet is equivalent to a Besov space, independent of the specific kernel choice, provided that mild assumptions hold on the radial profiles of the kernels. This characterization highlights the flexibility of the framework beyond the Hilbert space setting associated with the classical kernel methods. 
From a computational perspective, we first demonstrated that the explicit kernel structure  enables quasi-analytical evaluation of differential and nonlocal operators. This feature is particularly advantageous for problems involving high-order or nonlocal operators, where purely auto-diff schemes often encounter practical limitations. In addition, we conduct an in-depth study of the associated training algorithms, examining the roles of second-order optimization, inner-weight parameterization, and network adaptivity, etc. Closely related algorithmic and modeling alternatives, including fixed-width networks and anisotropic kernel parameterizations, are also systematically assessed. Numerical experiments are conducted on a four-dimensional bi-Laplacian problem, a fractional Laplacian equation, and viscous (anisotropic) Eikonal equations, showing the effectiveness of quasi-analytical evaluation for high-order and fractional operators, as well as the added expressiveness of anisotropic kernel parameterizations. In particular, the results indicate that different kernel choices can yield comparable performance across solutions, even when the solution regularity does not align with the native spaces of the kernels, consistent with the function-space theory developed herein. A comparison study of our method on $0^{\text{th}}$-, $2^{\text{nd}}$-, and $4^{\text{th}}$-order linear differential equations is also conducted, indicating that the performance of the method depends mildly on the PDE order. The 1D Eikonal testbed shows that convergence to the viscosity solution is governed by both the viscosity parameter and the distribution of collocation points (analogous to mesh resolution).

Further theoretical work is needed to formalize the above-mentioned interaction between the distribution of collocation points and regularization parameters. Developing a unified theory on resolution constraints in the meshfree setting would provide valuable guidance for machine learning–based PDE solvers. Also, beyond approximation-theoretic results developed in this work, a rigorous convergence analysis of the greedy training procedure, including kernel insertion and pruning under $\ell^1$-type regularization, is largely unexplored. Establishing conditions for convergence of the adaptive procedure, as well as understanding its optimization dynamics, would further strengthen the theoretical foundations of SparseRBFnet.

Further improvements in the computational efficiency of SparseRBFnet hinge on mitigating the challenges of optimizing inner weights as the problem dimension increases, in both isotropic and anisotropic kernel settings. As the dimensionality grows, jointly optimizing kernel centers and shapes leads to a substantially more challenging nonconvex optimization problem, particularly when second-order scheme is required. This motivates several promising directions for future algorithmic development. First, to mitigate the computational overhead of full second-order optimization, semi-smooth Gauss–Newton methods combined with limited-memory quasi-Newton schemes (e.g. L-BFGS) may provide an effective compromise between robustness and efficiency. Second, for anisotropic kernels in particular, surrogate or reduced-parameter models could be introduced to avoid brute-force parameterization of high-dimensional shape matrices, while still capturing essential directional features of the solution. Finally, although outer-weight-only training strategies can be viable in certain regimes, they still rely on accurate gradient estimation for kernel insertion and pruning. Developing more effective boosting or gradient-selection schemes may therefore further improve scalability and robustness.

\section*{Acknowledgements}
Z.~Shao and X.~Tian were supported in part by NSF DMS-2240180 and the Alfred P. Sloan Fellowship.
This manuscript has been co-authored by UT-Battelle, LLC, under contract DE-AC05-00OR22725 with the US Department of Energy (DOE). The US government retains and the publisher, by accepting the article for publication, acknowledges that the US government retains a nonexclusive, paid-up, irrevocable, worldwide license to publish or reproduce the published form of this manuscript, or allow others to do so, for US government purposes. DOE will provide public access to these results of federally sponsored research in accordance with the DOE Public Access Plan.

The authors thank Hongkai Zhao, Qiang Du, and Rahul Parhi for helpful discussions. 

\section*{Data availability}
The datasets generated during and/or analysed during the current study are available
from the corresponding author on reasonable request.

\bibliographystyle{acm}
\bibliography{refs}

\newpage

\begin{appendix}
\section{Proof of \Cref{thm:equivalence}}
\label{app:thmproof}

In this appendix, we show the proof of \Cref{thm:equivalence}.
The proof is based on the following three lemmas.

  \begin{lemma}
    Under the assumptions in Theorem \ref{thm:equivalence},
   $B^{s}_{1,1}(D)$ is continuously embeded in $\cV_\varphi^{\rm{meas}}(D)$.  
  \end{lemma}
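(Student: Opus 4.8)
The plan is to reduce the statement to a norm estimate on $\R^d$ and then to construct an explicit representing measure through a continuous Calderón-type reproducing formula in which the radial profile $\phi$ plays the role of the synthesis bump. Using the bounded extension operator from $B^s_{1,1}(D)$ to $B^s_{1,1}(\R^d)$ recalled above, it suffices to take $f := Eu \in B^s_{1,1}(\R^d)$ with $\|f\|_{B^s_{1,1}(\R^d)}\lesssim\|u\|_{B^s_{1,1}(D)}$ and to produce a finite signed measure $\mu$ on $\Omega=\R^d\times(0,\sigma_{\max}]$ with $u_\mu=f$ on $D$ and $\|\mu\|_{M(\Omega)}\lesssim\|f\|_{B^s_{1,1}(\R^d)}$; then $\|u\|_{\cV_\varphi^{\rm{meas}}(D)}\le\|\mu\|_{M(\Omega)}$ gives the embedding.

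First I would fix a radial analysis wavelet $\Psi$ whose Fourier transform $\widehat\Psi$ is smooth, nonnegative, and supported in an annulus contained in the ball $B_r(0)$ on which Assumption \ref{assu:kernels}(2) guarantees $\widehat\phi\ge c>0$. Since $\widehat\phi\,\widehat\Psi\ge0$ there, the scale integral $\int_0^\infty\widehat\phi(\sigma\xi)\widehat\Psi(\sigma\xi)\frac{d\sigma}{\sigma}$ is a positive constant independent of $\xi\neq0$ by dilation invariance, and after normalizing $\Psi$ I may assume it equals $1$. The decisive consequence of the support condition is that $\widehat\Psi(\sigma\xi)\neq0$ forces $\sigma|\xi|\lesssim r$, so for frequencies $|\xi|\gtrsim r/\sigma_{\max}$ the whole reproducing integral is supported in $\sigma\le\sigma_{\max}$; hence on this high-frequency band the truncated identity $f_{\mathrm{high}}=\int_0^{\sigma_{\max}}\sigma^{-d}\,\phi_{(\sigma)}*(\Psi_{(\sigma)}*f)\,\frac{d\sigma}{\sigma}$ holds, where $\phi_{(\sigma)}=\phi(|\cdot|/\sigma)$ and $\Psi_{(\sigma)}=\sigma^{-d}\Psi(\cdot/\sigma)$. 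Rewriting $\phi(|x-y|/\sigma)=\sigma^{d-s}\varphi(x;y,\sigma)$ exhibits $f_{\mathrm{high}}$ as $u_{\mu_{\mathrm{high}}}$ with density $\rho(y,\sigma)=\sigma^{-s-1}(\Psi_{(\sigma)}*f)(y)$, and a direct computation gives $\|\mu_{\mathrm{high}}\|_{M}=\int_0^{\sigma_{\max}}\sigma^{-s}\|\Psi_{(\sigma)}*f\|_{L^1}\frac{d\sigma}{\sigma}$. Because $\widehat\Psi$ is supported away from the origin, $\Psi$ has vanishing moments of all orders, so this integral is precisely the continuous Littlewood–Paley seminorm and is bounded by $\|f\|_{B^s_{1,1}(\R^d)}$ (the easy direction of the continuous characterization).

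It then remains to represent the low-frequency remainder $f_{\mathrm{low}}=f-f_{\mathrm{high}}$, whose spectrum lies in a fixed ball $B_{R_0}(0)$. Here I would invert at a single admissible scale $\sigma_0\in(0,\sigma_{\max}]$ chosen so that $\sigma_0 R_0\le r$, so that $\widehat{\phi_{(\sigma_0)}}$ is bounded below on the support of $\widehat{f_{\mathrm{low}}}$ by Assumption \ref{assu:kernels}(2). The multiplier $\chi_{\mathrm{low}}/\widehat{\phi_{(\sigma_0)}}$ is then smooth and compactly supported, its inverse Fourier transform $K$ is Schwartz, and $f_{\mathrm{low}}=\phi_{(\sigma_0)}*(K*f)$ expresses $f_{\mathrm{low}}$ as $u_{\mu_{\mathrm{low}}}$ with $\mu_{\mathrm{low}}$ supported on the slice $\{\sigma=\sigma_0\}$ and $\|\mu_{\mathrm{low}}\|_M\lesssim\|K\|_{L^1}\|f\|_{L^1}\lesssim\|f\|_{B^s_{1,1}(\R^d)}$. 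Setting $\mu=\mu_{\mathrm{high}}+\mu_{\mathrm{low}}$ yields $u_\mu=f$ on $\R^d$ with the required total-variation bound. Throughout, the hypothesis $s>d(1-1/p)$ is what guarantees $\sup_{\omega\in\Omega}\|\varphi(\cdot;\omega)\|_{L^p(D)}<\infty$, i.e. that $\omega\mapsto\varphi(\cdot;\omega)$ is bounded into $X=L^p(D)$, so that the synthesis integrals converge in $X$ and $u_\mu$ is a genuine element of $\cV_\varphi^{\rm{meas}}(D)$.

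I expect the main obstacle to be the interaction between the scale cap $\sigma\le\sigma_{\max}$ and the fact that Assumption \ref{assu:kernels} only gives positivity of $\widehat\phi$ on a ball rather than globally (for compactly supported profiles such as Wendland's, $\widehat\phi$ genuinely oscillates and changes sign). This forces the analysis wavelet to be frequency-localized inside $B_r(0)$, which in turn is exactly what makes the truncated high-frequency reproduction exact and what dictates the separate single-scale treatment of the low-frequency part; carefully matching the two pieces and checking that the induced density is genuinely integrable in $(y,\sigma)$ with constants controlled by the Besov norm is the technical heart of the argument. The decay furnished by $\widehat\phi\in W^{k,1}(\R^d)$ with $k>d$ enters here to ensure $\phi_{(\sigma)}\in L^1(\R^d)$ and that the convolutions and Fubini interchanges are justified.
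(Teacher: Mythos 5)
Your proposal is correct in substance and rests on the same mechanism as the paper's proof: extend $u$ to $\R^d$ by the bounded Besov extension operator, frequency-localize, and deconvolve each localized piece against a matched dilate of $\phi$, using Assumption \ref{assu:kernels}(2) twice — positivity of $\widehat\phi$ on $B_r(0)$ to make the inverse multipliers well defined at matched scales, and $\widehat\phi\in W^{k,1}(\R^d)$ with $k>d$ to put the inverse kernels in $L^1$. The difference is in the realization. The paper uses the discrete dyadic Littlewood--Paley decomposition $u=u_0+\sum_j u_j$, solves $u_j=\phi_j*\mu_j$ scale by scale via the multiplier $\psi_{j+1}/g_j$, and assembles $\mu=\sum_j 2^{js}\,\delta_{2^{-j}}\otimes\mu_j\,dy$ supported on dyadic slices of $\Omega$, so that $\|\mu\|_{M(\Omega)}$ is literally the dyadic Besov norm; the $j$-uniformity of $\|\cF^{-1}(\psi_{j+1}/g_j)\|_{L^1}$ comes for free from scaling. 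You instead use a continuous Calder\'on reproducing formula, producing an absolutely continuous measure with density $\sigma^{-s-1}(\Psi_{(\sigma)}*f)(y)$ whose total variation is the continuous Littlewood--Paley seminorm, plus a single-scale slice for the low-frequency remainder. Your version buys a closed-form density and makes the role of the cap $\sigma\le\sigma_{\max}$ transparent (the annular support of $\widehat\Psi$ makes the truncated reproduction exact at high frequencies), at the price of invoking the continuous characterization of $B^s_{1,1}$; your observation that $\widehat\phi$ may change sign (e.g.\ Wendland), forcing the analysis window inside $B_r(0)$, is exactly what the paper handles by rescaling so that wlog $r\geq 4$.

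Two local inaccuracies, both repairable. First, the identity $f_{\mathrm{low}}=\phi_{(\sigma_0)}*(K*f)$ is false as written: your truncation gives $\widehat{f_{\mathrm{high}}}=m\,\widehat f$ with $m(\xi)=\int_0^{\sigma_{\max}}\widehat\phi(\sigma\xi)\widehat\Psi(\sigma\xi)\frac{d\sigma}{\sigma}\in[0,1]$, and $m$ is strictly between $0$ and $1$ on a transition annulus inside $B_{R_0}(0)$, so $\cF[\phi_{(\sigma_0)}*K*f]=\chi_{\mathrm{low}}\widehat f$ differs there from $\widehat{f_{\mathrm{low}}}=(1-m)\widehat f$. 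The fix is to convolve $K$ with $f_{\mathrm{low}}$ itself, noting $f_{\mathrm{low}}\in L^1$ with controlled norm: since $s>0$, $\|f_{\mathrm{high}}\|_{L^1}\lesssim\int_0^{\sigma_{\max}}\|\Psi_{(\sigma)}*f\|_{L^1}\frac{d\sigma}{\sigma}\le\sigma_{\max}^{s}\int_0^{\sigma_{\max}}\sigma^{-s}\|\Psi_{(\sigma)}*f\|_{L^1}\frac{d\sigma}{\sigma}\lesssim\|f\|_{B^s_{1,1}(\R^d)}$. Second, $K$ is not Schwartz in general: Assumption \ref{assu:kernels}(2) gives only $\widehat\phi\in W^{k,1}(\R^d)$, so $\chi_{\mathrm{low}}/\widehat{\phi_{(\sigma_0)}}$ is a compactly supported $W^{k,1}$ function and $K$ merely decays like $|x|^{-k}$ with $k>d$ — but this yields $K\in L^1(\R^d)$, which is all your estimate uses, and it is precisely the argument the paper runs for its multipliers.
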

  \begin{proof}
  The proof of this lemma follows by adapting the argument in 
    \cite[Theorem 24]{shao2025solving}. Let $u \in B^{s}_{1,1}(D)$. Since $D$ is a bounded Lipschitz domain, it follows from \cite[Theorems 1.105 and 1.118]{triebel2006} (see also \cite{rychkov1999restrictions}) that there exists a bounded linear extension operator $E \colon B^{s}_{1,1}(D) \to B^{s}_{1,1}(\R^d)$ such that $Eu|_D = u$, and
    \[
    \norm{Eu}_{B^{s}_{1,1}(\R^d)} \leq C \norm{u}_{B^{s}_{1,1}(D)}.
    \]
    Therefore, with a slight abuse of notation, we can assume without loss of generality that $u \in B_{1,1}^s(\R^d)$.
  Thus, it can be decomposed as (see, e.g., \cite[Section~2.1.1]{sawano2018theory}) 
\[
u = u_0 + \sum_{j=1}^\infty u_j
\text{ with }
u_0 = \psi(\sD)u, u_j = \zeta_j(\sD)u
\text{ and } \norm{u}_{B_{1,1}^s} = \norm{u_0}_{L^1} + \sum_{j=1}^\infty 2^{j\,s} \norm{u_j}_{L^1} < \infty.
\]
 Here, \(\psi \colon \R^d \to [0,1]\) is a function in \(C_c^\infty(\R^d)\) with
\[\
\chi_{B_1(0)}(\xi) \leq \psi(\xi) \leq \chi_{B_2(0)}(\xi)
\]
and \(\zeta_j = \psi_j - \psi_{j-1}\), where \(\psi_j(\xi) = \psi(2^{-j} \xi)\). Moreover, for any such function \(\psi\) (or \(\zeta_j\)) the symbol \(\psi(\sD)\) denotes the multiplication in Fourier space \(\psi(\sD) u = \cF^{-1}[\psi \cdot \cF[u]]\), where \((\psi \cdot \cF[u])(\xi) = \psi(\xi) \cF[u](\xi)\).

Next, we show that every \(u_j\) can be represented as a convolution with a kernel \(\phi_j(x) := 2^{jd}\phi(2^{j} |x|)\) with scale \(\sigma_j = 2^{-j}\) in the form
\begin{equation}
\label{eq:rec_mu}
u_j = \phi_j * \mu_j \quad\text{with } \mu_j \in L^1(\R^d).
\end{equation}
We argue this by defining the Fourier transform of \(\phi_j\) as
\(g_j = \cF\phi_j\) with \(g_j(\xi) = g_0(2^{-j}\xi)\) and
\[
\mu_j = [\psi_{j+1}/g_j](\sD) u_j.
\]
By Assumption \ref{assu:kernels}(2), \(g_j\) is bounded from below on $B_{r 2^j}(0)$ for some $r>0$. We may assume without loss of generality that \(r \geq 4\). 
Indeed, if not, we can always rescale the kernel \(\phi\) by a constant factor in the spatial variable, which does not change the function space \(\cV_\varphi^{\rm{meas}}(D)\) (up to equivalent norms).
Therefore \(g_j \in W^{k,1}(\R^d)\) for $k>d$ and is bounded from below on the support of \(\psi_{j+1}\). 
Thus the multiplier \(\psi_{j+1}/g_j\) is well-defined and is in \( W^{k,1}(\R^d) \), which implies that its inverse Fourier transform \(\varphi^{-1}_j = \cF^{-1}(\psi_{j+1}/g_j)\) is in \(L^1(\R^d)\). Due to \(\varphi^{-1}_j(x) = 2^{j d} \varphi^{-1}_j(2^j x)\) the \(L^1\) norm is independent of \(j\) and equal to a constant \(c\), which implies
\[
\norm{\mu_j}_{L^1} = \norm{[\psi_{j+1}/g_j](\sD) u_j}_{L^1}
= \norm{\varphi^{-1}_j * u_j}_{L^1}
\leq c\norm{u_j}_{L^1}.
\]
Moreover, due to \(g_j \psi_{j+1}/g_j * \zeta_j = \zeta_j\) and \(g_0 \psi_{1}/g_0 * \psi = \psi\), we have~\eqref{eq:rec_mu} as claimed.
Now, we define the measure
\[
\de\mu(y,\sigma) = \sum_{j=0}^\infty\sigma^{-s}_{j}\delta_{\sigma_j}(\sigma) \mu_j(y) \de y
= \sum_{j=0}^\infty 2^{j\,s}\delta_{2^{-j}}(\sigma) \mu_j(y) \de y.
\]
Without loss of generality, we assume $\sigma_{\max}\geq 1$.
We can then directly verify that \(u = \int_{\Omega} \varphi(\cdot; \omega) d\mu(\omega)\) and 
\[
\norm{u}_{\cV_\varphi^{\rm{meas}}(D)} \leq
\norm{\mu}_{M(\Omega)} = \sum_{j=0}^\infty 2^{j\,s} \norm{\mu_j}_{L^1}
\leq c \sum_{j=0}^\infty 2^{j\,s} \norm{u_j}_{L^1}
= c \norm{u}_{B^s_{1,1}(\R^d)}.
\qedhere
\]
\end{proof} 

    \begin{lemma}
        Under the assumptions in Theorem \ref{thm:equivalence},
   $\cV_\varphi^{\rm{meas}}(D)$ is continuously embeded in $\cV_\varphi^{\rm{atom}}(D)$. 
  \end{lemma}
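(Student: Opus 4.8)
The goal is to show that every $u \in \cV_\varphi^{\rm{meas}}(D)$ lies in $\cV_\varphi^{\rm{atom}}(D)$ with $\norm{u}_{\cV_\varphi^{\rm{atom}}(D)} \le \norm{u}_{\cV_\varphi^{\rm{meas}}(D)}$. The plan is to realize a representing integral $u_\mu = \int_\Omega \varphi(\cdot;\omega)\,\de\mu(\omega)$ as a Bochner integral of the feature map $\Phi(\omega) := \varphi(\cdot;\omega)$ into the ambient space $X = L^p(D)$, and then to approximate this integral by Riemann sums that are, after normalization, admissible atomic combinations. Concretely, it suffices to prove that whenever $\norm{\mu}_{M(\Omega)} \le 1$ one has $u_\mu \in \overline{\cA_\varphi}$; the general bound then follows by rescaling $\mu$ by $m := \norm{\mu}_{M(\Omega)}$ and taking the infimum over all measures representing $u$.

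First I would verify that $\Phi$ is well-behaved as an $X$-valued map. A scaling change of variables gives $\norm{\varphi(\cdot;y,\sigma)}_{L^p(D)} \le \sigma^{\gamma + d/p}\,\norm{\phi(\abs{\cdot})}_{L^p(\R^d)}$, and since $\gamma + d/p = s - d(1 - 1/p) > 0$ by hypothesis while $\phi(\abs{\cdot}) \in L^p(\R^d)$ by Assumption \ref{assu:kernels}(1), this yields a uniform bound $\sup_{\omega\in\Omega}\norm{\Phi(\omega)}_X =: C_\Phi < \infty$ over the entire base $\Omega = \R^d\times(0,\sigma_{\max}]$. Combined with the continuity of $\Phi:\Omega\to X$ recorded above (which is exactly where the relation $s > d(1-1/p)$ enters), this makes $\Phi$ Bochner integrable against the finite measure $\mu$, so that $u_\mu = \int_\Omega \Phi\,\de\mu$ is a genuine $X$-valued integral with $\int_\Omega \norm{\Phi}_X\,\de\abs{\mu} \le C_\Phi\,\norm{\mu}_{M(\Omega)}$.

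Next comes the discretization. Fixing $\epsilon > 0$, by inner regularity of the finite Radon measure $\abs{\mu}$ I would choose a compact set $K \subseteq \Omega$ with $\abs{\mu}(\Omega \setminus K) < \epsilon$, so that the tail contributes at most $C_\Phi\,\epsilon$ in $X$. On $K$ the map $\Phi$ is uniformly continuous, so I can partition $K$ into finitely many Borel pieces $A_1,\dots,A_M$ of small diameter, pick $\omega_k \in A_k$, and form the Riemann sum $R = \sum_{k=1}^M \mu(A_k)\,\varphi(\cdot;\omega_k)$. Estimating $\norm{\int_K \Phi\,\de\mu - R}_X \le \sum_k \int_{A_k}\norm{\Phi(\omega) - \Phi(\omega_k)}_X\,\de\abs{\mu}(\omega)$ and invoking uniform continuity shows this is $O(\epsilon\,\norm{\mu}_{M(\Omega)})$. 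Crucially, the coefficients satisfy $\sum_k \abs{\mu(A_k)} \le \abs{\mu}(K) \le \norm{\mu}_{M(\Omega)}$, so $R$ lies in $\norm{\mu}_{M(\Omega)}\,\cA_\varphi$. Letting $\epsilon \to 0$ then places $u_\mu$ in $\norm{\mu}_{M(\Omega)}\,\overline{\cA_\varphi}$, whence $\norm{u_\mu}_{\cV_\varphi^{\rm{atom}}(D)} \le \norm{\mu}_{M(\Omega)}$; taking the infimum over representing measures completes the argument.

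The step I expect to be the main obstacle is controlling the behavior of $\Phi$ near $\sigma \to 0^+$ and over the noncompact base $\Omega$: one must guarantee simultaneously the uniform $X$-bound and the $X$-continuity of the feature map, and it is precisely the condition $s > d(1-1/p)$ (equivalently $\gamma + d/p > 0$) that prevents the small-scale kernels from blowing up in $L^p(D)$. Once this is secured, the reduction to a compact set by tightness and the Riemann-sum approximation of the Bochner integral are routine, since the only structural property of $\cA_\varphi$ used is that admissible atoms may carry coefficients whose absolute values sum to the total variation of $\mu$.
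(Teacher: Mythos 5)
Your proof is correct and follows essentially the same route as the paper: both realize $u_\mu$ as a Bochner integral of the feature map $\Phi(\omega)=\varphi(\cdot;\omega)$ into $X=L^p(D)$ (using $s>d(1-1/p)$ to control the $\sigma^{\gamma+d/p}$ scaling as $\sigma\to 0^+$) and then approximate by finitely supported discrete measures whose total variation is dominated by $\norm{\mu}_{M(\Omega)}$. The only cosmetic differences are that you build the approximants explicitly via tightness and Riemann sums on a compact set, where the paper invokes the abstract density of simple functions in $L^1(\abs{\mu};X)$ and then lower semicontinuity of the atomic norm, and that the condition $s>d(1-1/p)$ enters chiefly through the uniform bound on $\norm{\Phi(\omega)}_X$ (continuity at points of $\Omega$ holds regardless, since $\sigma>0$ there) --- neither difference affects validity.
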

  \begin{proof}
    Notice that the function $\varphi$ induces a map $\Phi: \Omega \to X$ defined by $\Phi(\omega) = \varphi(\cdot; \omega)$.
    Under Assumption \ref{assu:kernels} and $s > d(1-1/p)$, it is not hard to see that $\Phi: \Omega \to X$ is continuous,
    and the scalar-valued function $\|\Phi(\cdot)\|_X$ is integrable on $(\Omega, |\mu|)$
    for any \(\mu \in M(\Omega)\) since 
    \[
    \begin{split}
    &\int_{\Omega} \| \Phi(\omega)\|_X d|\mu|(\omega) = \int_{\Omega}  \sigma^{s-d} \left\|  \phi\left(\frac{|\cdot - y|}{\sigma}\right)\right\|_{L^p(D)} d|\mu|(\sigma, y)  \\
    \leq &  \int_{\Omega}  \left( \int_{\R^d} \left| \phi\left(\frac{|x - y|}{\sigma}\right)\right|^p dx \right)^{1/p} \sigma^{s-d} |\mu|(\sigma, y) \\
    \leq & \int_{\Omega}   \left( \int_{\R^d} \left| \phi\left(|x|\right)\right|^p dx \right)^{1/p} \sigma^{s-d + d/p} |\mu|(\sigma, y) \\
    \leq & \sigma_{\max}^{s-d + d/p}  \| \phi(|\cdot|) \|_{L^p(\R^d)} \cdot \| \mu \|_{M(\Omega)} \leq C.  \\
    \end{split}
    \]
    Here, we computed the case $1 \leq p < \infty$, and the case $p = \infty$ is similar.
    Therefore, $\Phi$ is Bochner integrable, i.e., $\Phi \in L^1(|\mu|; X)$.
    The rest of the proof is based on the standard approximation of Bochner integrable functions by simple functions.
    In particular, there exists $X$-valued simple functions 
$
    \Phi_{n} = \sum_{j=1}^{J_n}\Phi(\omega_{n, j})\chi_{A_{n, j}} 
$
with $\{A_{n, j}\}$ measurable and $\omega_{n, j} \in A_{n, j}$ such that
\[
    \int_{\Omega} \|\Phi(\omega) - \Phi_{n}(\omega)\|_{X} d\mu(\omega) \rightarrow 0.
\]
For any \(\mu \in M(\Omega)\), Let $\mu_{n} = \sum_{j=1}^{J_n} \mu(A_{n, j}) \delta_{\omega_{n, j}}$, then $\|\mu_n\|_{M(\Omega)} \leq \|\mu\|_{M(\Omega)}$ and 
\[
    \|u_{\mu} - u_{\mu_{n}}\|_{X} = \|\int_{\Omega}(\Phi - \Phi_n) d\mu\|_{X} \rightarrow 0.
\]
    Hence, any $u_\mu$ with $\| \mu \|_{M(\Omega)} \leq 1$ lies in $\overline{\cA_\varphi}^{\|\cdot\|_{X}}$, which implies $\cV_\varphi^{\rm{meas}}(D) \subseteq \cV_\varphi^{\rm{atom}}(D)$. 
    Also, for $u = u_\mu \in \cV_\varphi^{\rm{meas}}(D)$, by the lower semi-continuity of $\| \cdot \|_{\cV_\varphi^{\rm{atom}}(D)}$ in $X$ (since the sublevel sets are closed), we have
\[
    \|u_\mu\|_{\cV_\varphi^{\rm{atom}}(D)} \leq \liminf_{n\rightarrow \infty} \|u_{\mu_{n}}\|_{\cV_\varphi^{\rm{atom}}(D)} = \liminf_{n\rightarrow \infty}  \|\mu_{n}\|_{M(\Omega)} = \| \mu \|_{M(\Omega)}.
\]
Therefore, we have $ \|u_\mu\|_{\cV_\varphi^{\rm{atom}}(D)} \leq  \|u_\mu\|_{\cV_\varphi^{\rm{meas}}(D)}$ by selecting $\mu$ such that $\| \mu\|_{M(\Omega)} = \|u_\mu\|_{\cV_\varphi^{\rm{meas}}(D)}$. 
\end{proof}
    \begin{lemma}
        Under the assumptions in Theorem \ref{thm:equivalence},
  $\cV_\varphi^{\rm{atom}}(D)$ is continuously embeded in $B^{s}_{1,1}(D)$.
  \end{lemma}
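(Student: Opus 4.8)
The plan is to prove the embedding in three stages: first establish a \emph{uniform} $B^s_{1,1}$ bound on the individual atoms $\varphi(\cdot;\omega)$, then extend this to all of $\cA_\varphi$ by subadditivity of the norm, and finally pass to the closure $\overline{\cA_\varphi}^{\|\cdot\|_X}$ using lower semicontinuity of the Besov norm, from which the bound on the Minkowski functional follows. The heart of the matter is the scale-invariance built into the normalization $\varphi(x;y,\sigma)=\sigma^{s-d}\phi(|x-y|/\sigma)$, which is precisely why the exponent $s-d$ is chosen.

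Translation invariance of all norms involved lets me take $y=0$; write $\phi_0(x):=\phi(|x|)$. A change of variables $x\mapsto x/\sigma$ in the $m$-th difference operator gives $\|\Delta^m_{h}\varphi(\cdot;0,\sigma)\|_{L^1(\R^d)}=\sigma^{s}\|\Delta^m_{h/\sigma}\phi_0\|_{L^1(\R^d)}$, and after the substitution $t=\sigma\tau$ the powers of $\sigma$ cancel, so the homogeneous Besov seminorm $[\,\cdot\,]_{\dot B^s_{1,1}}:=\int_0^\infty t^{-s-1}\sup_{|h|\le t}\|\Delta^m_{h}\,\cdot\,\|_{L^1}\,dt$ is exactly invariant: $[\varphi(\cdot;0,\sigma)]_{\dot B^s_{1,1}}=[\phi_0]_{\dot B^s_{1,1}}$. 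This seminorm is finite because $\phi_0\in B^s_{1,1}(\R^d)$ by Assumption~\ref{assu:kernels}(1) and, for $s>0$, the tail $\int_1^\infty$ is controlled by $2^m\|\phi_0\|_{L^1}/s$ using $\|\Delta^m_{h}\phi_0\|_{L^1}\le 2^m\|\phi_0\|_{L^1}$. Combined with $\|\varphi(\cdot;0,\sigma)\|_{L^1}=\sigma^s\|\phi_0\|_{L^1}\le \sigma_{\max}^{s}\|\phi_0\|_{L^1}$ for the low-order term (here $\sigma\le\sigma_{\max}$ and $s>0$ are used), and with the $(0,1)$-integral in \eqref{eq:BesovNormIntegralForm} bounded by the $(0,\infty)$-version, this yields $\|\varphi(\cdot;\omega)\|_{B^s_{1,1}(\R^d)}\le C$ uniformly in $\omega\in\Omega$. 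Restricting via the bounded restriction operator $B^s_{1,1}(\R^d)\to B^s_{1,1}(D)$ preserves the uniform bound.

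By subadditivity, any $f=\sum_{n=1}^{N}a_n\varphi(\cdot;\omega_n)$ with $\sum_n|a_n|\le 1$ satisfies $\|f\|_{B^s_{1,1}(D)}\le C\sum_n|a_n|\le C$; that is, $\cA_\varphi$ lies in the ball of radius $C$ in $B^s_{1,1}(D)$. To transfer this bound to $\overline{\cA_\varphi}^{\|\cdot\|_X}$, I will show that the $B^s_{1,1}(D)$-norm is lower semicontinuous with respect to convergence in $X=L^p(D)$. Since $D$ is bounded, $L^p(D)\hookrightarrow L^1(D)$, so $X$-convergence implies $L^1(D)$-convergence; for each fixed $t$ the map $u\mapsto\sup_{|h|\le t}\|\Delta^m_{h,D}u\|_{L^1(D)}$ is Lipschitz in the $L^1(D)$-norm (with constant $2^m$, uniformly in $h$), so the integrand in \eqref{eq:BesovNormIntegralForm} is stable under $L^1$-limits. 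Fatou's lemma then gives $\|u\|_{B^s_{1,1}(D)}\le\liminf_n\|f_n\|_{B^s_{1,1}(D)}$ whenever $f_n\to u$ in $X$, so every element of $\overline{\cA_\varphi}^{\|\cdot\|_X}$ has $B^s_{1,1}(D)$-norm at most $C$.

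Finally, for $u\in\cV_\varphi^{\rm{atom}}(D)$ and any $t>\|u\|_{\cV_\varphi^{\rm{atom}}(D)}$ one has $u/t\in\overline{\cA_\varphi}^{\|\cdot\|_X}$, hence $\|u\|_{B^s_{1,1}(D)}\le Ct$; letting $t\downarrow\|u\|_{\cV_\varphi^{\rm{atom}}(D)}$ yields the continuous embedding $\|u\|_{B^s_{1,1}(D)}\le C\|u\|_{\cV_\varphi^{\rm{atom}}(D)}$. The step I expect to require the most care is the scale-invariance computation, since it is exactly here that the exponent $s-d$ in the prefactor is used and where the finiteness of the homogeneous seminorm (via the $s>0$ tail estimate) must be verified; the closure step is then routine once the lower-semicontinuity via Fatou is in hand.
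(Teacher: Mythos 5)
Your proof is correct, and while it follows the same skeleton as the paper's (uniform $B^s_{1,1}$ bound on single atoms via scaling, triangle inequality over $\cA_\varphi$, lower semicontinuity to pass to the closure, then the Minkowski functional), it differs substantively in two steps, in both cases to your advantage. First, in the scaling step the paper changes variables in the seminorm and writes the $t$-integral as if its upper limit stays at $1$, whereas the substitution $t=\sigma\tau$ actually moves it to $1/\sigma$; your use of the homogeneous seminorm $\int_0^\infty t^{-s-1}\sup_{|h|\le t}\|\Delta^m_h\cdot\|_{L^1}\,dt$, which is \emph{exactly} scale-invariant, together with the explicit tail bound $\int_1^\infty t^{-s-1}2^m\|\phi_0\|_{L^1}\,dt = 2^m\|\phi_0\|_{L^1}/s$, handles this cleanly and makes transparent where $s>0$ and the exponent $s-d$ enter. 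Second, for the closure step the paper invokes the compact embedding $B^s_{1,1}(D)\hookrightarrow\hookrightarrow L^p(D)$ (Triebel, Theorem 1.107, requiring $s>d(1-1/p)$ and the Lipschitz domain) to extract an a.e.-convergent subsequence and then applies Fatou pointwise; you instead observe that $L^p(D)\hookrightarrow L^1(D)$ on the bounded domain and that $u\mapsto\sup_{|h|\le t}\|\Delta^m_{h,D}u\|_{L^1(D)}$ is $2^m$-Lipschitz in $L^1(D)$ uniformly in $t$, so the integrand in \eqref{eq:BesovNormIntegralForm} converges for each fixed $t$ and Fatou in $t$ alone suffices. Your route is more elementary, avoids compactness entirely, and shows this direction of the embedding needs only $s>0$ and boundedness of $D$ rather than the full hypothesis $s>d(1-1/p)$ (which is genuinely needed elsewhere in the theorem); it also sidesteps a slightly garbled sentence in the paper, where a subsequence converging in $X$ is extracted from a sequence already assumed to converge in $X$. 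The paper's version is marginally shorter given that the Triebel citation is used elsewhere anyway, but nothing is lost in your formulation.
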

    \begin{proof}
    
    First, notice that by the scaling \eqref{eq:kernelscaling}, for any $\omega = (y, \sigma)\in\Omega$
    \[
    \| \varphi(\cdot; \omega)\|_{B^{s}_{1,1}(\R^d)} \leq  C \| \phi(|\cdot|) \|_{B^{s}_{1,1}(\R^d)}, 
    \]
    where $C$ depends only on $s$ and $\sigma_{\max}$. Indeed, by $\varphi(\cdot; \omega) = \sigma^\gamma \phi(|\cdot - y| /\sigma)$, we have 
    \[
    \|\varphi(\cdot; \omega) \|_{L^1(\R^d)} = \sigma^{d+\gamma} \| \phi(|\cdot|)\|_{L^1(\R^d)} \leq \sigma_{\max}^s \| \phi(|\cdot|)\|_{L^1(\R^d)}. 
     \]
     By the integral form of the Besov norm (see \eqref{eq:BesovNormIntegralForm} with $D=\R^d$), 
     \[
     \begin{split}
     |\varphi(\cdot; \omega) |_{B^s_{1,1}(\R^d)} &= \sigma^\gamma\int_{0}^1 t^{-s-1} \sup_{|h|\leq t} \|\Delta^m_{h} \phi(|\cdot - y|/\sigma) \|_{L^1(\R^d)} dt  \\
     & =\sigma^\gamma \int_{0}^1 t^{-s-1} \sup_{|h|\leq t/\sigma} \sigma^d \|\Delta^m_{h} \phi(|\cdot|) \|_{L^1(\R^d)} dt  \\
     & = \sigma^{\gamma-s+d} \int_{0}^1 t^{-s-1} \sup_{|h|\leq t}  \|\Delta^m_{h} \phi(|\cdot|) \|_{L^1(\R^d)} dt  \\
     &= |\phi(|\cdot|)|_{B^s_{1,1}(\R^d)}.
      \end{split} 
     \]
     Therefore, the inequality holds.
     
    Using the above result, for all finite atomic sums $f = \sum_{n=1}^N a_n \varphi(x; \omega_n) \in \cA_\varphi$, 
     \[
    \| f\|_{B^{s}_{1,1}(D)} \leq  \| f \|_{B^{s}_{1,1}(\R^d)} \leq C \| \phi(|\cdot|) \|_{B^{s}_{1,1}(\R^d)} \sum_{n=1}^N |a_n| \leq C \| \phi(|\cdot|) \|_{B^{s}_{1,1}(\R^d)}.
     \]
   Since $D$ is a bounded Lipschitz domain and $s > d (1-1/p)$, we have the compact embedding of $B^{s}_{1,1}(D)$ into $X = L^p(D)$ (see, e.g., \cite[Theorem 1.107]{triebel2006}).
  One can then show that the above inequality also holds for all $f \in \overline{\cA_{\varphi}}^{\|\cdot\|_X} $. 
    More precisely, take $f \in \overline{\cA_{\varphi}}^{\|\cdot\|_X} $, then there exists a sequence $\{f_m\}_{m=1}^{\infty} \subseteq \cA_\varphi$ 
    such that $f_m \to f$ in $X$. Moreover, since $\{ f_m\}_{m=1}^\infty$ is a bounded set in $B^s_{1,1}(D)$ by the above inequality, 
    compact embedding implies the existence of a subsequence $\{f_{m_k}\}_{k=1}^{\infty}$ such that $f_{m_k} \to f$ in $X$. 
    Therefore, up to a subsequence (without relabeling), we have $f_{m_k}(x) \to f(x)$ for almost every $x \in D$.
    Using the standard integral form of the Besov norm and Fatou's lemma, we have
    \[
    \| f\|_{B^{s}_{1,1}(D)} \leq \liminf_{k\to\infty} \| f_{m_k}\|_{B^{s}_{1,1}(D)} \leq  C \| \phi(|\cdot|) \|_{B^{s}_{1,1}(\R^d)}.
    \]
    It is then straightforward to see that for all $f \in \cV_\varphi^{\rm{atom}}(D)$,
    \[
    \| f \|_{B^{s}_{1,1}(D)} \leq C \| \phi(|\cdot|) \|_{B^{s}_{1,1}(\R^d)} \| f \|_{\cV_\varphi^{\rm{atom}}(D)}.
    \]
    This completes the proof.
\end{proof}

    \section{Integer order chain rule}
    \label{app:int-order-chainrule}
We show that $L[\phi(\rho(\cdot;\omega))](x)$ can be computed analytically without relying on automatic differentiation for general integer-order differential operators. Without loss of generality, we consider a second-order linear differential operator \(L\) of the form
\[
L [u](x)=A(x):\nabla^{2}\phi(x)+B(x)\cdot\nabla u(x)+c(x)u(x),
\]
where \(A(x)\in\R^{d\times d}\), \(B(x)\in\R^{d}\), and \(c(x)\in\R\).
Let
\[
u(x):=\phi(\rho(x;\omega)),
\]
and define the unit direction
\[
e(x)=\frac{x-y}{|x-y|}
\]
A direct application of the chain rule yields
\[
\nabla u(x)=\frac{1}{\sigma}\phi'(\rho) e,\qquad \nabla^{2}u(x)
=
\frac{1}{\sigma^{2}}
\left[
\phi''(\rho)\,ee^{T}
+
\frac{\phi'(\rho)}{\rho}
\bigl(I-ee^{T}\bigr)
\right],
\]
where the expression is understood in the continuous extension sense at \(\rho=0\).
Substituting these expressions into the definition of \(L\), we obtain
\[
\begin{aligned}
L[\phi(\rho(\cdot;\omega))](x)
&=
\frac{1}{\sigma^{2}}
\Bigl(
\phi''(\rho)\,e^{T}A(x)e
+
\frac{\phi'(\rho)}{\rho}
\bigl(\tr A(x)-e^{T}A(x)e\bigr)
\Bigr) \\
&\quad
+
\frac{1}{\sigma}\phi'(\rho)\,B(x)\cdot e
+
c(x)\phi(\rho),
\end{aligned}
\]
which depends only on one-dimensional derivatives of the radial profile \(\phi\) and simple algebraic contractions with the coefficients \(A(x)\) and \(B(x)\).
This demonstrates that general second-order differential operators admit closed-form evaluation under the RBF ansatz, and higher integer-order operators follow analogously by repeated application of the chain rule.

    \section{Computation of fractional Laplacian}
    \label{app:frac}

Unlike classical integer-order Laplacian problems, where Dirichlet conditions are imposed locally on the boundary, fractional Laplacian problems involve exterior conditions defined nonlocally on $\Upsilon$. A commonly used approach, which we also adopt in our previous experiments, is to sample a large number of collocation points in $\Upsilon$ ($D^{c}$ in our case) to enforce the exterior condition. However, since $u_{c,\omega}$, in general, does not exactly match $g$ on $\Upsilon$ during training, this strategy may introduce additional approximation errors in the evaluation of the fractional Laplacian.

A more accurate, albeit more technical, approach for handling the exterior condition leverages the availability of the given Dirichlet condition $g$ and reformulates the problem using a corrected model function (\cite{burkardt2021unified,zhuang2022radial}) defined as
\begin{equation*}
        \tilde{u}_{c, \omega}(x) = \begin{cases}
u_{c, \omega}(x) &\quad  x \in D \\
g(x) &\quad x \in D^c 
\end{cases}.
    \end{equation*}
   Its fractional Laplacian then admits following expression
\begin{equation*}
   (-\Lap)^{\beta/2} \tilde{u}_{c, \omega}(x) = (-\Lap)^{\beta/2} u_{c, \omega} (x) + \cR_{g}[u_{c, \omega}](x),
\end{equation*}
where $(-\Lap)^{\beta/2} u_{c, \omega} (x)$ is given by \eqref{eq:fracLapRBF} and the residue $\cR_{g}$ is given by 
\begin{equation*}
    \cR_{g}[u](x)=  c_{d, \beta}\int_{D^{c}} \frac{u(z) - g(z)}{|x - z|^{d+\beta}} dz.
\end{equation*}
With this formulation, the extensive sampling of collocation points in $D^{c}$ is no longer needed. Nevetheless, while estimation of $\cR_{g}$ by numerical quadrature is not hard, it introduces challenges in computing gradients with respect to the inner weights and may cause numerical instability (see also \cite{zhuang2022radial}). Also, $\tilde{u}_{c, \omega}$ may not be continuous unless \(u=g\) is enforced on \(\partial D\), and thus $(-\Lap)^{\beta/2}\tilde{u}_{c, \omega}$ may not be rigorously defined. A detailed investigation of efficient and stable strategies for handling this method is left for future work.

\section{Implementation and Computation aspects}
\subsection{Implementation considerations for dynamically sized adaptive networks}
\label{app:imp}
A central practical challenge in adaptive, sparsity-promoting kernel methods is that the network width changes dynamically during optimization due to both insertion (Phase~I) and deletion (Phase~III). Unlike fixed-width neural networks, this leads to linear systems whose dimension varies across iterations, which is incompatible with just-in-time compilation (supported by JAX), static memory layouts, and batched linear algebra kernels commonly used in modern accelerator-based frameworks.

To address this issue, we adopt a padded active-set strategy that decouples the logical support size from the physical array dimensions. At each iteration, all parameters (outer weights, kernel centers, and shape parameters) are stored in arrays of a prescribed padding size, while a Boolean support mask tracks the currently active components. Gradient, Hessian, and proximal updates are restricted to the active set through masked operations, ensuring that inactive variables do not contribute to either the objective or the linearized subproblems.

At each Gauss–Newton iteration, we permute the system so that all currently active indices are moved to the front, yielding a compactified ordering. The linear system is then solved in a fixed-size padded representation using this reordered indexing. After computing the update, the solution is mapped back to the original parameter ordering by applying the inverse permutation. This strategy allows the active set to evolve dynamically while keeping array dimensions fixed, thereby avoiding repeated reallocation or reshaping of large matrices and ensuring consistency of the Gauss–Newton step within a static-shape computational framework.

Overall, this implementation enables dynamic network growth and shrinkage within a static-shape computational framework, balancing algorithmic flexibility with the practical constraints of high-performance JAX-based computation. The resulting solver remains fully adaptive while retaining stable memory usage, efficient linear algebra, and predictable compilation behavior.

\subsection{Computation overhead of Bi-Laplacian operator with automatic differentiation}
\label{app:compDiff}
We now examine the computational overhead of computing Bi-Laplacian operator for PINNs and our SparseRBFnet framework. The execution time of evaluation of bi-laplacian (with $d=4$) operator for MLP with auto-diff and SparseRBFnet with analytical expression is visualized in \Cref{fig:bilap_comp}.
\begin{figure}[t]
    \centering\includegraphics[width=0.7\linewidth]{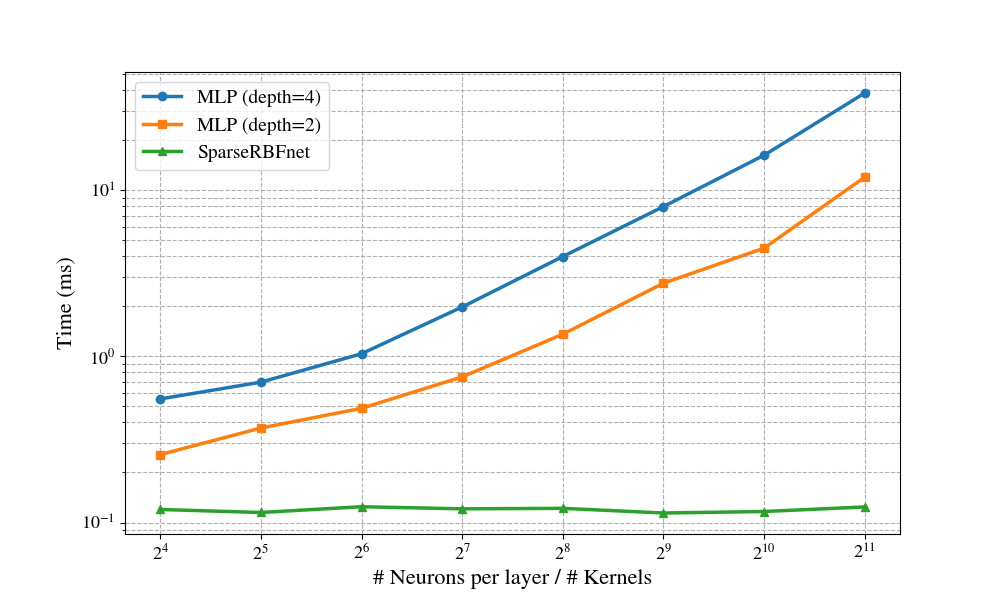}
    \caption{Comparison of execution time. All results are obtained by evaluating on 1,024 data points, and averaged from 10 individual runs.}
    \label{fig:bilap_comp}
\end{figure}

\end{appendix}

\end{document}